\documentclass[
10pt,leqno]{article}

\usepackage{amssymb}
\usepackage{mathrsfs}
\usepackage[centertags]{amsmath}
\usepackage{amsfonts}
\usepackage{amsthm}
\usepackage{xcolor}
\usepackage{amssymb}

\usepackage{fancyhdr}
\usepackage{cite}
\setlength{\topmargin}{0cm} \setlength{\oddsidemargin}{0cm}
\setlength{\evensidemargin}{0cm} \setlength{\textwidth}{16.5truecm}
\setlength{\textheight}{22truecm}

\input colordvi

\newcommand\dela[1]{\Green}
\newcommand\adda[1]{\Blue{#1}}

\theoremstyle{plain}

\newtheorem{theorem}{Theorem}[section]
\newtheorem{proposition}[theorem]{Proposition}
\newtheorem{defi}[theorem]{Definition}
\newtheorem{lem}[theorem]{Lemma}

\newtheorem{remark}{Remark}[section]

\newtheorem{assu}{Assumption}[section]

\newcommand\E{{\mathbb E}}

\newcommand\R{{\mathbb R}}
\newcommand\LL{{\mathbb{L}}}

\newcommand\EE{{\mathbb E}}

\def\d{\text{\rm{d}}}

\allowdisplaybreaks[1]

\begin{document}
\baselineskip 16pt
\numberwithin{equation}{section}
\title{ $\LL^p$-solutions for stochastic Navier-Stokes equations with jump noise
\footnote{This work is supported  by NSFC (11501509, 11571147, 11822106, 11831014), NSF of Jiangsu Province
(BK20160004), and the Qing Lan Project and PAPD of Jiangsu Higher Education Institutions.}}

\date{}

\maketitle

\centerline{\scshape Jiahui Zhu$^a$, Zdzis{\l}aw Brze\'{z}niak$^b$, Wei Liu$^{c,}$\footnote{Corresponding author: weiliu@jsnu.edu.cn}}
\medskip
 {\footnotesize
\centerline{ $a.$   School of Science, Zhejiang University of Technology, Hangzhou 310019, China }
 \centerline{ $b.$ Department of Mathematics, University of York,  York YO10 5DD, UK}
\centerline{  $c.$ School of Mathematics and Statistics, Jiangsu Normal University, Xuzhou 221116, China}
 }

\begin{abstract}
\noindent \textbf{Abstract.} We study the existence and uniqueness of solutions  of 2D Stochastic Navier-Stokes equation with space irregular jump noise for initial data in certain Sobolev spaces of negative order. Comparing with the Galerkin approximation method, the main advantage of this work is to use an $\LL^p$-setting to obtain the solution under much weaker assumptions on the noise and the initial condition.

\noindent \textbf{Keywords.} Stochastic Navier-Stokes equation; Poisson random measure; $\LL^{p}$-theory.
%
 \end{abstract}

\numberwithin{equation}{section}

\title{{\bf  } }
\section{Introduction}
If some random jump type noise is taken into account, the time evolution of the velocity and mechanical pressure of an incompressible flow of fluid enclosed in a bounded domain $D$ with smooth boundary $\partial D$ could be described by the following stochastic Navier-Stokes equations
\begin{align}\label{NSE}
\begin{split}
&\d u(t,x)+\Big[- \Delta u(t,x)+(u(t,x)\cdot \nabla )u(t,x)+\nabla p(t,x)\Big]\d t\\
&\hspace{3cm}=\int_Z\xi(t,z)\tilde{N}(\d t,\d z)\;\;\text{in } D\times(0,T), \\
& u(t,x)=0\;\;\text{on } \partial D\times(0,T), \\
&\text{div } u(t,x)=0 \;\;\text{in  } D\times(0,T),\\
&u(0,x)=u_0\;\;\text{in  } D.
\end{split}
\end{align}
Here $u(t,x) \in \mathbb{R}^{2}$ and $p(t,x) \in
\mathbb{R}$ denote  the velocity and pressure of the fluid at position $x\in D
$ and time $t \geq 0$ respectively, $\tilde{N}(\d t,\d z)=N(\d t,\d z)-\nu(\d z)\d t$ is a compensated Poisson random measure defined on a filtered probability space $(\Omega,\mathcal{F},(\mathcal{F}_{t})_{t\geq0},\mathbb{P})$ with intensity measure $\nu$ on a measurable space $(Z,\mathcal{Z})$, and $\xi$ is some 
process to be specified later.  For simplicity we let the viscosity constant be 1.

The problem of existence and uniqueness of solutions of stochastic Navier-Stokes equations (with jump type noise) has been already studied by many authors, see e.g. \cite{[Brz-Cap-Fla],[Brz-Hau-Zhu],Br+Liu+Zhu,DX09,[Prato+Zab_1996],{Fla+Gat},GRZ,LR,LR1,Men+Sri} and more references therein. One standard approach to prove the existence of weak solutions is to use the Galerkin approximation scheme (variational method). By employing the local monotonicity techniques, one can establish certain a priori estimates and build the weak solutions by means of compactness arguments. In  particular, the first two authors and Hausenblas \cite{[Brz-Hau-Zhu]}  established the existence and uniqueness of weak solutions in $\LL^{2}$ for 2D stochastic Navier-Stokes Equations driven by jump-type noise by following the Galerkin approximation and the weak compactness arguments. Nevertheless, this approach works only for weak solutions.

  For $1<p<\infty$, let $\mathbb{L}^{p}(D):=L^p(D;\R^2)$ and $\LL^p_{sol}(D)$ be the closure in $\mathbb{L}^{p}(D)$ of all solenoidal vector functions with compact support in $D$. Let $\Pi_{p}$ be the orthogonal projection of $\LL^{p}(D)$ onto $\LL^p_{sol}(D)$, see\cite{Fuj+Mor}. We define the stoke operator by $A_{p} u:=-\Pi_{p}\Delta u$. It was showed in \cite{Giga-1981} $-A_{p} $ generates in $\LL_{sol}^{p}(D)$ a bounded analytic $C_{0}$-semigroup $(e^{-tA_{p}})$. By applying the projection operator $\Pi_{p}$ to both sides of  \eqref{NSE},  we could transform the stochastic Navier-Stokes equations into the standard operator form
\begin{align*}\label{NSE-abstract0}
\begin{split}
\d u(t)&=-Au(t)+B(u(t),u(t))\d t
+\int_Z \xi(t,z)\tilde{N}(\d t,\d z), \\
u(0)&=u_0.
\end{split}
\end{align*}
For the deterministic Navier-Stokes equations, Giga and Miyakawa \cite{Giga+Miya-1985} generalized an $\LL^{p}(D)$-theory, $p>1$ and proved the existence and uniqueness of strong solutions in $\LL^{p}(D)$ spaces. Weissler \cite{Weissler} developed a detailed $\LL^{p}$-theory in bounded domain with smooth boundary and proved the existence of local solutions with initial conditions in $H^{\alpha,p}$. Kato \cite{Kato-1984} studied the existence of global strong $\LL^{p}$-solutions in $\mathbb{R}^{n}$, $n\geq 2$ and investigated the time decay properties of solutions. Da Prato and Zabczyk \cite{[Prato+Zab_1996]} showed the existence and uniqueness of mild solutions in $\LL^{4}(0,T; \LL^{4}_{sol}(D))$ for the Wiener noise case in Hilbert spaces.

This work is devoted to proving the existence and uniqueness of  solutions in the space $  \LL^4(0,T; \LL^4_{sol}(D))$  to \eqref{NSE} with initial data $u_{0}\in \mathbb{H}^{-2\alpha,4}(D)$, $\alpha \in (0,\frac14)$,  when the noise $\xi$ takes values in the $\mathbb{H}^{-2\alpha,4}(D)$ space. Comparing with Galerkin approximation method, our proof is simpler and depends crucially on the $\LL^{p}$-theory of the stoke operator and we could get solution in $\LL^4(0,T; \LL^4_{sol}(D))$. In comparison to \cite{[Brz-Hau-Zhu]}, where solutions are in $ \LL^{\infty}(0,T;\LL^{2}(D))\cap \LL^2(0,T;\mathbb{H}^1_0(D))$ and the OU process $Z$ is assumed to be in $  \LL^2(0,T;\mathbb{H}^1_0(D))$, we work for equations with much weaker assumptions on the noise than the Galerkin approximation method could handle. The fundamental ideas of the proofs are inspired by our recent work \cite{Br+Liu+Zhu2019}. We established in \cite{Br+Liu+Zhu2019} a type of maximal inequality for  stochastic convolutions driven by L\'evy-type noise and as an application, we showed the existence and uniqueness of mild solutions of stochastic quasi-geostrophic equations in terms of $\LL^p$ theory.  However, since $(e^{-tA_{p}})$ is only a bounded analytic $C_{0}$-semigroup  in $\mathbb{L}^{p}(D)$, we cannot apply the maximal inequality for $C_{0}$-contraction semigroups in \cite{Br+Liu+Zhu2019}.  Hence different techniques are required for this pursue. The main feature of our approach here is that we can prove  the trajectories of the corresponding OU process
 (defined in \eqref{eqn-OU process}) belongs to the space $\LL^4(0,T; \LL^4(D))$ using the Burkholder inequality.

The motivation for studying Navier-Stokes equations driven by space irregular jump processes comes from physics and the use of very rough noise is widely accepted there.
In particular, the model with rougher noise  is closer to reality in many cases. For example, Landau and Lifshitz in \cite[Chapter 17]{LL_1987}  proposed  to study NSEs under additional stochastic  small fluctuations. Consequently the authors considered the classical balance laws for mass, energy and momentum forced
by a random noise, to describe the fluctuations, in particular local stresses and temperature,
which were not related to the gradient of the corresponding quantities. In \cite[Chapter 12]{LL_1968} the same authors then derived
correlations for the random forcing by following the general theory of fluctuations. One of their  requirements was that the noise was either spatially uncorrelated or correlated as little as possible. It is well known that the more uncorrelated noise is more spatially irregular the corresponding L\'evy process is.  Similar problems but for a Gaussian noise have been investigated in \cite{Brz+Ferrario_2015}.  The current work deals with the stochastic Navier-Stokes equations driven by an additive jump type noise, and the multiplicative noise case will be investigated in a forthcoming paper.

          This paper is organized as follows. In Section 2, we introduce some  preliminary results about  the Stokes operator and state the main existence and uniqueness result of this work. Section 3 is devoted to the proof of the existence and uniqueness of global mild solutions of \eqref{NSE}.

\section{Main result}
We assume that $D\subset \mathbb{R}^{2}$ is a bounded domain with sufficiently smooth boundary $\partial D$.
We denote by $\mathbb{L}^{p}(D):=L^p(D; \R^2)$, $1<p<\infty$ the space of all $p$-th  integrable $\R^2$-valued functions on $D$  and endowing with the norm
\begin{align*}
\|v\|_{\LL^p}=\Big(\sum_{i=1}^{2}\|v_{i}\|^{p}_{L^{p}(D)}\Big)^{\frac1p},
\end{align*}
where $v=(v_{1}, v_{2})$.
For $m\in \mathbb{N}, 1\leq p\leq \infty$, we denote the Sobolev space by $\mathbb{W}^{m.p}(D):=W^{m.p}(D;\R^{2})$ consisting of all functions in $\LL^p(D)$ with partial derivatives up to order $m$ in $\LL^p(D)$. This is a Banach space  with the norm
\begin{align*}
    \|u\|_{\mathbb{W}^{m,p}}=\Big(\sum_{k\leq m}\|D^ku\|_{\LL^p}^p\Big)^{\frac1p}.
\end{align*}
When $p=2$, $\mathbb{W}^{m,2}(D)$ is replaced by $\mathbb{H}^m(D)$ which is a Hilbert space endowed with the inner product
$$
\langle u,v\rangle =\sum_{|\alpha|\leq m}(D^{\alpha}u,D^{\alpha}v)_{\LL^2}.
$$
Let $C^{\infty}_c(D;\R^{2})$ be  the space of infinitely differential functions with compact support contained in $D$.
We denote by $\mathbb{W}_{0}^{m,p}(D)$ the closure of $C_c^{\infty}(D;\R^{2})$ in $\mathbb{W}^{m,p}(D)$ and $\mathbb{H}^{m}_{0}(D):=\mathbb{W}_{0}^{m,2}(D)$.
  Let $\mathcal{V}$ denote the space of all $v\in C_c^{\infty}(D)$ such that $\text{div }v=0$ and
 $\LL^p_{sol}(D)$ denote the closure of $\mathcal{V}$ in $\LL^p(D)$.

Let $H=\LL^2_{sol}(D)$ and
  $V$ denote the closure of $\mathcal{V}$ in $\mathbb{H}_0^1(D)$ with respect to the norm
\begin{align}
\|u\|_V^2=\|\nabla u\|^2_{\LL^{2}}.
\end{align}

\begin{remark}
Note that $\LL^p(D)$ for $2\leq p<\infty$ is a martingale type $2$ Banach space and $\LL^p_{sol}(D)$ is again a martingale type $2$ Banach space, see. e.g. \cite{Zhu2017,Br+Liu+Zhu2019}.
\end{remark}

 Let $\Pi_{p}$ be the Leray-Helmholtz projection of $\LL^p(D)$ onto $\LL^p_{sol}(D)$ for $2<p<\infty$. Let us recall that $\Pi_{2}$ is the orthogonal projection from  $\LL^2(D)$  onto $H$. We define the Stokes operator $A_{p}$ by
 \begin{align*}
        &A_{p} u=-\Pi_{p}\Delta u\\
        &\mathcal{D}(A_{p})=D(-\Delta_{p})\cap \LL^p_{sol}(D)= \mathbb{W}_{0}^{2,p}(D)\cap \LL^p_{sol}(D).
 \end{align*}

 \begin{lem}[{\cite[Theorem 2]{Giga-1981}}] The operator $-A_{p}$ generates in $\LL_{sol}^{p}(D)$ a bounded analytic $C_{0}$-semigroup.  For every $\alpha>0$ the following estimate holds with some constant $C$,
\begin{align*}
\|A^{\alpha}_{p}e^{-tA_{p}}f\|_{\LL^p}\leq C t^{-\alpha}\|f\|_{\LL^p},\quad f\in \LL^p_{sol}(D),\; t>0.
\end{align*}
 \end{lem}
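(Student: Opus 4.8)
The plan is to deduce both assertions from a single uniform resolvent estimate for the Stokes operator in a sector. Recall the standard characterisation: since $A_p$ is densely defined, $-A_p$ generates a bounded analytic $C_0$-semigroup on $\LL^p_{sol}(D)$ precisely when there is an angle $\theta\in(0,\tfrac{\pi}{2})$ such that the sector $\Sigma_\theta:=\{\lambda\in\mathbb{C}\setminus\{0\}:|\arg\lambda|<\tfrac{\pi}{2}+\theta\}$ lies in the resolvent set of $-A_p$ and
\begin{align*}
\sup_{\lambda\in\Sigma_\theta}\big\|\lambda\,(\lambda+A_p)^{-1}\big\|_{\mathcal{L}(\LL^p_{sol}(D))}<\infty .
\end{align*}
Thus I would first fix $f\in\LL^p_{sol}(D)$ and study the resolvent Stokes system: find $u\in\mathbb{W}_0^{2,p}(D)\cap\LL^p_{sol}(D)$ and a pressure $q$ with
\begin{align*}
\lambda u-\Delta u+\nabla q=f,\qquad \text{div}\,u=0 \ \text{ in } D,\qquad u=0 \ \text{ on } \partial D .
\end{align*}
Applying $\Pi_p$ and using $\Pi_p\nabla q=0$ turns this into $(\lambda+A_p)u=f$, so the generation statement and the norm bound reduce entirely to proving the a priori estimate
\begin{align*}
|\lambda|\,\|u\|_{\LL^p}+|\lambda|^{1/2}\|\nabla u\|_{\LL^p}+\|\nabla^2 u\|_{\LL^p}+\|\nabla q\|_{\LL^p}\le C\,\|f\|_{\LL^p},\qquad \lambda\in\Sigma_\theta,
\end{align*}
with $C$ independent of $\lambda$ and $f$.

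The heart of the matter, and the step I expect to be the main obstacle, is precisely this resolvent estimate on the bounded domain, because the pressure couples the two components and the Dirichlet condition must be respected uniformly as $|\lambda|\to\infty$. I would establish it by the classical reduction to model problems. On the whole plane $\R^2$ the solution is given explicitly through the Fourier multipliers $(\lambda+|\xi|^2)^{-1}$ and the Helmholtz symbol $\delta_{jk}-\xi_j\xi_k/|\xi|^2$, and the desired bounds follow from the Mikhlin--H\"ormander theorem, where the parabolic scaling $\xi\mapsto|\lambda|^{1/2}\xi$ renders the multiplier constants uniform over $\Sigma_\theta$. On the half-plane $\R^2_+$ with the no-slip condition I would solve by tangential Fourier transform and an ODE in the normal variable, obtaining a representation whose kernel again satisfies the same uniform estimates. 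For the bounded domain $D$ I would use a finite partition of unity subordinate to a cover by interior balls and boundary charts, flatten $\partial D$ using its smoothness, and transplant the two model estimates; the commutators $[\Delta,\varphi_j]$ and the divergence corrections produced by the cut-offs are of lower order and can be absorbed into the left-hand side once $|\lambda|$ is large, or treated by a Neumann series, after the pressure is controlled through the associated weak Neumann problem.

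This yields the resolvent estimate for $|\lambda|\ge\lambda_0$ inside $\Sigma_\theta$. To cover the whole sector I would use that on a bounded domain $A_p$ has compact resolvent with spectrum contained in the open right half-plane, so $0\in\rho(A_p)$ and $(\lambda+A_p)^{-1}$ is uniformly bounded on the remaining compact portion of $\Sigma_\theta$; combining the two ranges gives the uniform sectorial bound on all of $\Sigma_\theta$, hence by the criterion above the bounded analytic $C_0$-semigroup $(e^{-tA_p})$. Finally, the fractional-power estimate is a routine consequence of analyticity. As $A_p$ is invertible, the powers $A_p^\alpha$ are well defined, and writing
\begin{align*}
e^{-tA_p}=\frac{1}{2\pi i}\int_\Gamma e^{t\lambda}\,(\lambda+A_p)^{-1}\,d\lambda,\qquad
A_p^\alpha e^{-tA_p}=\frac{1}{2\pi i}\int_\Gamma (-\lambda)^\alpha e^{t\lambda}\,(\lambda+A_p)^{-1}\,d\lambda
\end{align*}
over a contour $\Gamma\subset\Sigma_\theta$ surrounding the spectrum of the generator, the resolvent bound $\|\lambda(\lambda+A_p)^{-1}\|\le C$, the decay of $|e^{t\lambda}|$ along $\Gamma$, and the rescaling $\lambda\mapsto\lambda/t$ give $\|A_p^\alpha e^{-tA_p}f\|_{\LL^p}\le C\,t^{-\alpha}\|f\|_{\LL^p}$ for every $t>0$; this is the standard computation found, e.g., in Pazy's monograph.
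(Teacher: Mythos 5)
The first thing to say is that the paper does not prove this lemma at all: it is quoted verbatim from Giga \cite{Giga-1981} (Theorem 2 there), with the fractional--power machinery taken from Pazy \cite{Pazy}. So there is no in-paper argument to compare against; what you have written is an outline of how one would prove Giga's theorem from scratch, and the shape of that outline is correct. The reduction of generation to a uniform sectorial resolvent bound, the identification $(\lambda+A_p)u=\Pi_p f$ after projecting the resolvent Stokes system, the invertibility of $A_p$ on a bounded domain, and the contour-integral derivation of $\|A_p^{\alpha}e^{-tA_p}\|\le Ct^{-\alpha}$ from the resolvent bound are all standard and sound. It is also worth noting that your route to the resolvent estimate (localization to the whole-space and half-space model problems, treated by Fourier multipliers and a tangential transform) is not Giga's route: Giga constructs the resolvent as a pseudodifferential parametrix with parameter following Seeley's theory of boundary value problems. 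Your localization strategy is the one used in later treatments (Solonnikov, Farwig--Sohr) and is perfectly legitimate, arguably more elementary in its ingredients.

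The genuine gap is that the one step you yourself identify as ``the heart of the matter'' --- the uniform estimate $|\lambda|\,\|u\|_{\LL^p}+\|\nabla^2 u\|_{\LL^p}+\|\nabla q\|_{\LL^p}\le C\|f\|_{\LL^p}$ on $\Sigma_\theta$ for the bounded domain --- is asserted rather than proved, and the difficulties hiding in the localization are precisely where the content of Giga's paper lives. Two of them deserve to be named. First, multiplying a solenoidal field by a cut-off $\varphi_j$ destroys the divergence-free condition, so the localized problems are Stokes systems with nonzero divergence $\nabla\varphi_j\cdot u$; repairing this inside the solenoidal class requires a Bogovskii-type right inverse of the divergence with $\LL^p$ bounds, which your sketch does not mention. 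Second, the pressure is nonlocal (it solves a Neumann problem driven by $f$ and $u$ globally), so the commutator and correction terms are not automatically ``lower order'' in a way that can be absorbed uniformly in $\lambda$ without a careful bookkeeping of which norms carry factors of $|\lambda|$. Neither obstacle is fatal --- both are overcome in the literature --- but as written your argument does not close, and the lemma should be treated, as the paper treats it, as a citation to \cite{Giga-1981} unless these steps are carried out in full.
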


 Now recall the definition of the fractional powers $A_{p}^{\alpha}$, see e.g. \cite{Pazy}. For $\alpha>0$, define $A_{p}^{\alpha}$ as the inverse of
 \begin{align*}
    &A_{p}^{-\alpha}=\frac{1}{\Gamma(\alpha)}\int_{0}^{\infty}t^{\alpha-1}e^{-tA_{p}}\d t,
 \end{align*}
and define $\mathbb{H}^{\alpha,p}(D)=(A_{p})^{-\frac{\alpha}2}\LL^{p}(D)$, i.e. the range of $(A_{p})^{-\frac{\alpha}2}$ with the norm
 \begin{align}\label{H-norm}
     \|u\|_{\mathbb{H}^{\alpha,p}}=\Vert  A_{p}^{\frac{\alpha}2}u \Vert_{\LL^{p}}.
 \end{align}
 For $\alpha<0$, define $\mathbb{H}^{\alpha,p}(D)$ to be the completion of $\LL^{p}_{sol}(D)$ with respect to the norm \eqref{H-norm}.

  We use the notation $\mathbb{W}_{0}^{s,p}(D):=W_{0}^{s,p}(D;\R^{2})$ for the classical Sobolev space defined as the interpolation space $[\LL^{p}(D),\mathbb{W}_{0}^{m,p}(D)]_{\frac{s}{m}}$, where $m>s$. Let $-\Delta_{p} $ be the Laplacian with the Dirichlet boundary condition, that is
  \begin{align*}
        -\Delta_{p}=-\Delta\quad\text{with}\quad
        D(-\Delta_{p})=\mathbb{W}^{2,p}(D)\cap \mathbb{W}_{0}^{1,p}(D).
  \end{align*}
   Note that it can be shown that $\mathbb{W}_{0}^{s,p}(D)=D((-\Delta_{p})^{\frac{s}2})=[\LL^{p}(D),D(-\Delta_{p})]_{\frac{s}2}$ and the graph norm of $(-\Delta_{p})^{\frac{s}2}$ is equivalent to the inherited norm from $\mathbb{W}^{s,p}_{0}(D)$.

 \begin{lem}[{\cite[Theorem 2 and 3]{Giga-1985}}]For every $0<\alpha<1$, we have
\begin{align*}
 \mathbb{H}^{\alpha,p}(D)=D(A_{p}^{\frac{\alpha}2})=[\LL^{p}_{sol}(D),D(A_{p})]_{\frac{\alpha}2}=\LL^{p}_{sol}(D)\cap D((-\Delta_{p})^{\frac{\alpha}2})=\LL^{p}_{sol}(D)\cap\mathbb{W}_{0}^{\alpha,p}(D).
\end{align*}
\end{lem}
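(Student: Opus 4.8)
The plan is to read the asserted identity as a chain of four equalities and to treat each link separately, isolating the two that carry genuine content. The outermost equality $\mathbb{H}^{\alpha,p}(D)=D(A_{p}^{\alpha/2})$ is essentially definitional: by construction $\mathbb{H}^{\alpha,p}(D)$ is the range of $A_{p}^{-\alpha/2}$ equipped with the norm $\|A_{p}^{\alpha/2}\cdot\|_{\LL^p}$, while $A_{p}^{\alpha/2}$ is defined as the inverse of $A_{p}^{-\alpha/2}$. Since $-A_{p}$ generates a bounded analytic semigroup and the Stokes operator on a bounded domain has $0$ in its resolvent set (a spectral gap), $A_{p}^{-\alpha/2}$ is a bounded injective operator, so its range is exactly $D(A_{p}^{\alpha/2})$ and the two norms coincide. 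The innermost equality $\LL^{p}_{sol}(D)\cap D((-\Delta_{p})^{\alpha/2})=\LL^{p}_{sol}(D)\cap\mathbb{W}_{0}^{\alpha,p}(D)$ is immediate from the characterisation $\mathbb{W}_{0}^{s,p}(D)=D((-\Delta_{p})^{s/2})$ recorded above (with $s=\alpha\in(0,1)$), intersected with $\LL^{p}_{sol}(D)$.

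It remains to prove the two middle identities. For $D(A_{p}^{\alpha/2})=[\LL^{p}_{sol}(D),D(A_{p})]_{\alpha/2}$ I would invoke the standard interpolation theorem that identifies, for a positive operator admitting bounded imaginary powers, the complex interpolation space $[X,D(A)]_{\theta}$ with $D(A^{\theta})$ and equivalent norms, for every $\theta\in(0,1)$. Concretely, once one knows that $A_{p}$ has bounded imaginary powers on $\LL^{p}_{sol}(D)$, a property of the Stokes operator of Giga--Sohr type, the cited characterisation applied with $\theta=\alpha/2$ yields the claim directly.

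The substantive step, and the one I expect to be the main obstacle, is the coincidence $D(A_{p}^{\alpha/2})=\LL^{p}_{sol}(D)\cap D((-\Delta_{p})^{\alpha/2})$, i.e. that the fractional powers of the Stokes operator and of the Dirichlet Laplacian agree on solenoidal fields. The plan is to show that the Helmholtz projection $\Pi_{p}$ acts boundedly on the fractional scale $\mathbb{W}_{0}^{\alpha,p}(D)$ for $0<\alpha<1$ and commutes, in the appropriate sense, with the fractional powers, so that $A_{p}^{\alpha/2}=\Pi_{p}(-\Delta_{p})^{\alpha/2}$ on $\LL^{p}_{sol}(D)\cap D((-\Delta_{p})^{\alpha/2})$ with two-sided norm control. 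The delicate point is the matching of boundary conditions: because $\alpha/2<1/2$ only the lowest-order trace is involved, so no obstruction from the divergence-free constraint or from higher normal derivatives appears and the two domains genuinely coincide; were $\alpha$ allowed to reach or exceed $1$ this comparison would fail in general. This is precisely the content of Giga's Theorem~3, and I would either reproduce the resolvent-comparison argument there or cite it directly. Combining it with the interpolation identity of the previous paragraph also furnishes the remaining link $[\LL^{p}_{sol}(D),D(A_{p})]_{\alpha/2}=\LL^{p}_{sol}(D)\cap D((-\Delta_{p})^{\alpha/2})$, closing the chain.
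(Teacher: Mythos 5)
The paper offers no proof of this lemma at all: it is imported verbatim as \cite[Theorems 2 and 3]{Giga-1985}, so your proposal cannot diverge from an argument that isn't there. Your decomposition is sound and correctly identifies which links are definitional (the identification of $\mathbb{H}^{\alpha,p}(D)$ with $D(A_p^{\alpha/2})$, using $0\in\rho(A_p)$ on a bounded domain, and the identity $\mathbb{W}_0^{\alpha,p}(D)=D((-\Delta_p)^{\alpha/2})$ already recorded in the paper) and which carry the real content --- bounded imaginary powers of the Stokes operator for the complex-interpolation identification of $D(A_p^{\alpha/2})$, and the comparison of fractional domains of $A_p$ and $-\Delta_p$ for $\alpha/2<1/2$ --- both of which you, like the authors, ultimately defer to Giga, so the proposal is a faithful reconstruction of the cited result rather than a genuinely different route.
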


 \begin{lem}[{\cite[Proposition 3.1]{Weissler}}]\label{lem-est-W} Let $1<p<\infty$ and $\alpha<\beta$. Then for any $t>0$, $e^{{-tA_{p}}}$ is a bounded map from $\mathbb{H}^{2\alpha,p}(D)$ to $\mathbb{H}^{2\beta,p}(D)$. Moreover, for each $T>0$ there exists a constant $C$, depending also on $p,\alpha$ and $\beta$ such that
 \begin{align}
 \|e^{-tA_{p}}u\|_{\mathbb{H}^{2\beta,p}}\leq C t^{-(\beta-\alpha)}\|u\|_{\mathbb{H}^{2\alpha,p}}.
 \end{align}
 \end{lem}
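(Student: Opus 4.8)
The plan is to reduce everything to the smoothing estimate from the first lemma (Giga 1981), namely $\|A_{p}^{\gamma}e^{-tA_{p}}f\|_{\LL^p}\le C t^{-\gamma}\|f\|_{\LL^p}$ for $\gamma>0$, by exploiting the commutation properties of the fractional powers $A_{p}^{s}$ with the bounded analytic semigroup $(e^{-tA_{p}})$. Since by definition of the spaces one has $\|v\|_{\mathbb{H}^{2\alpha,p}}=\|A_{p}^{\alpha}v\|_{\LL^p}$ and $\|w\|_{\mathbb{H}^{2\beta,p}}=\|A_{p}^{\beta}w\|_{\LL^p}$, the target inequality is literally $\|A_{p}^{\beta}e^{-tA_{p}}u\|_{\LL^p}\le C t^{-(\beta-\alpha)}\|A_{p}^{\alpha}u\|_{\LL^p}$, so the whole point is the algebraic factorisation $A_{p}^{\beta}e^{-tA_{p}}=A_{p}^{\beta-\alpha}e^{-tA_{p}}A_{p}^{\alpha}$ together with the fact that $\beta-\alpha>0$.

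First I would treat the case $\alpha\ge 0$. Take $u\in \mathbb{H}^{2\alpha,p}(D)=\mathcal{R}(A_{p}^{-\alpha})$, so that $u=A_{p}^{-\alpha}f$ with $f=A_{p}^{\alpha}u\in\LL^p_{sol}(D)$. Because $-A_{p}$ generates a bounded analytic semigroup, $e^{-tA_{p}}$ maps $\LL^p_{sol}(D)$ into $\mathcal{D}(A_{p}^{s})$ for every $s>0$ and every $t>0$, and the fractional powers commute with the semigroup and obey the composition rule $A_{p}^{a}A_{p}^{b}=A_{p}^{a+b}$ on the relevant domains (see \cite{Pazy}). Hence $A_{p}^{\beta}e^{-tA_{p}}u = A_{p}^{\beta}e^{-tA_{p}}A_{p}^{-\alpha}f = A_{p}^{\beta-\alpha}e^{-tA_{p}}f$, and applying the first lemma with $\gamma=\beta-\alpha>0$ yields $\|A_{p}^{\beta}e^{-tA_{p}}u\|_{\LL^p}\le C t^{-(\beta-\alpha)}\|f\|_{\LL^p}=C t^{-(\beta-\alpha)}\|u\|_{\mathbb{H}^{2\alpha,p}}$, which is exactly the claim.

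For $\alpha<0$, where $\mathbb{H}^{2\alpha,p}(D)$ is the completion of $\LL^p_{sol}(D)$ under $\|\cdot\|_{\mathbb{H}^{2\alpha,p}}$, the same computation applies verbatim to $u\in\LL^p_{sol}(D)$: here $A_{p}^{\alpha}=A_{p}^{-|\alpha|}$ is a bounded operator, one still has $A_{p}^{\beta}e^{-tA_{p}}u=A_{p}^{\beta-\alpha}e^{-tA_{p}}(A_{p}^{\alpha}u)$, and the first lemma gives the bound with $f$ replaced by $A_{p}^{\alpha}u$. This establishes that $e^{-tA_{p}}\colon \LL^p_{sol}(D)\to\mathbb{H}^{2\beta,p}(D)$ is bounded with the stated operator norm on the dense subspace $\LL^p_{sol}(D)$; since that bound is controlled uniformly by the $\mathbb{H}^{2\alpha,p}$-norm, $e^{-tA_{p}}$ extends by density and continuity to a bounded map on all of $\mathbb{H}^{2\alpha,p}(D)$ satisfying the same inequality.

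I expect the only genuine subtlety to be the justification of the operator identity $A_{p}^{\beta}e^{-tA_{p}}A_{p}^{-\alpha}=A_{p}^{\beta-\alpha}e^{-tA_{p}}$, together with the density extension in the case $\alpha<0$. This is not really a computation but a bookkeeping issue about domains: one must verify that $e^{-tA_{p}}$ maps into $\bigcap_{s>0}\mathcal{D}(A_{p}^{s})$ and that the additivity and commutation laws for the fractional powers hold on these domains. All of these are standard facts from the theory of fractional powers of generators of bounded analytic semigroups, so the argument is essentially an application of the first lemma once the factorisation is set up correctly.
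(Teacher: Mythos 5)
Your argument is correct: the identity $\|v\|_{\mathbb{H}^{2\gamma,p}}=\|A_p^{\gamma}v\|_{\LL^p}$ reduces the claim to the factorisation $A_p^{\beta}e^{-tA_p}=A_p^{\beta-\alpha}e^{-tA_p}A_p^{\alpha}$ plus the smoothing estimate of Giga's lemma with exponent $\beta-\alpha>0$, and your handling of the case $\alpha<0$ by density is the right way to pass to the extrapolation space. Note, however, that the paper does not prove this lemma at all --- it is imported verbatim from Weissler \cite[Proposition 3.1]{Weissler} --- so there is no in-paper proof to compare against; your write-up is essentially the standard argument one finds in that reference, with the only points requiring care being exactly the ones you flag (the domain bookkeeping for the commutation and additivity of fractional powers, which hold here because the Stokes semigroup on a bounded domain is analytic with $0$ in the resolvent set of $A_p$).
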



Next, we define  $b:C_c^{\infty}(D)\times C_c^{\infty}(D)\times C_c^{\infty}(D) \rightarrow \R$ by the following trilinear form:
 $$
 b(u,v, w)=\int_D (u \cdot\nabla) v w \,dx=\sum_{i,j=1}^2\int_D u^i(x) D_i v^j(x) w^j(x) \,dx
 , $$ whenever
 $u,v,w \in C_c^{\infty}(D)$ are such that the integral on
the right-hand side (RHS) exists.

We have the following result which is
fundamental for our work.
\begin{lem}\label{lem:form-b} The trilinear map
$b:C_c^{\infty}(D)\times C_c^{\infty}(D)\times C_c^{\infty}(D)  \to \mathbb{R}$ has a unique
extension to a bounded trilinear map from $\LL^4_{sol}(D)\times (\LL^4(D)
\cap\mathrm{H})\times \mathrm{V} $ to $\mathbb{R}$.
\end{lem}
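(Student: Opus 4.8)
The plan is to reduce the whole statement to a single integration by parts followed by Hölder's inequality, exploiting that the first argument is solenoidal. The obstruction to a naive estimate is that the middle argument $v$ lives in $\LL^4(D)\cap H$ and is \emph{not} differentiable, while the defining integrand contains $D_iv^j$. So the first task must be to move that derivative onto an argument that can absorb it, namely the third one, which ranges over $V$ and therefore has gradient in $\LL^2(D)$.

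First I would work on the smooth solenoidal core: take $u\in\mathcal{V}$ and $v,w\in C_c^\infty(D;\R^2)$. Integrating by parts in $x_i$, summing over $i$, and using both $\operatorname{div}u=\sum_i D_iu^i=0$ and the compact support of $u$ (so that the boundary integral over $\partial D$ vanishes), one obtains the antisymmetry relation
\begin{align*}
b(u,v,w)=-b(u,w,v)=-\sum_{i,j=1}^2\int_D u^i(x)\,D_iw^j(x)\,v^j(x)\,\d x .
\end{align*}
This identity is the heart of the argument: it transfers the spatial derivative from the non-differentiable factor $v$ onto $w\in V$.

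With the derivative relocated, I would then estimate each of the finitely many terms by Hölder's inequality with exponents $(4,4,2)$, using $\tfrac14+\tfrac14+\tfrac12=1$, and sum over $i,j\in\{1,2\}$; recalling $\|w\|_V=\|\nabla w\|_{\LL^2}$ this produces a constant $C$ with
\begin{align*}
|b(u,v,w)|\le C\,\|u\|_{\LL^4}\,\|v\|_{\LL^4}\,\|w\|_{V}
\end{align*}
for all $u\in\mathcal{V}$, $v,w\in C_c^\infty(D;\R^2)$. Note that no Sobolev embedding is needed here: the two-dimensional balance $\LL^4\times\LL^4\times\LL^2$ is exactly right. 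Finally I would extend by density. Since $\mathcal{V}$ is dense in $\LL^4_{sol}(D)$ and in $V$, and since $\LL^4(D)\cap H=\LL^4_{sol}(D)$ so that $\mathcal{V}$ is also dense in the middle factor for the $\LL^4$-norm, trilinearity together with the uniform bound above lets $b$ extend continuously to $\LL^4_{sol}(D)\times(\LL^4(D)\cap H)\times V$ with the same estimate. Uniqueness is automatic: two bounded trilinear forms agreeing on $\mathcal{V}\times\mathcal{V}\times\mathcal{V}$ coincide everywhere by continuity.

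The only genuinely delicate point is the first step: one must be certain that $\operatorname{div}u=0$ and the compact support of $u$ together annihilate the boundary term, so that the antisymmetry—and hence the legitimate transfer of the derivative off $v$—holds on the approximating core. Once that identity is secured, everything else is Hölder and a routine density/continuity argument.
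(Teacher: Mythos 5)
Your proposal is correct and follows essentially the same route as the paper: use $\operatorname{div}u=0$ and integration by parts to get the antisymmetry $b(u,v,w)=-b(u,w,v)$, then apply H\"older with exponents $(4,4,2)$ to obtain $|b(u,v,w)|\le \|u\|_{\LL^4}\|v\|_{\LL^4}\|\nabla w\|_{\LL^2}$, and extend by density. The paper's proof is just a terser version of the same argument (it leaves the density step implicit), so there is nothing substantive to add.
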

\begin{proof}
If $\textrm{div}\,u=0$, we have
  \begin{align*}
        \int_{D} (u\cdot \nabla) v\,w\; \d x=-\int_{D} (u\cdot \nabla) w\,v \;\d x.
  \end{align*}
  This gives
  \begin{align}\label{eq-b-1}
            b(u,v,w)=-b(u,w,v),\quad u\in \LL^4_{sol}(D),\; v,w\in \mathbb{H}_{0}^1(D).
  \end{align}
  So we infer
   \begin{align}\label{eq-b-2}
            b(u,v,v)=0,\quad u\in \LL^4_{sol}(D),\;v\in \mathbb{H}_{0}^1(D).
  \end{align}
 Using the H\"older inequality and equality \eqref{eq-b-1} we can deduce  the following estimate
\begin{equation}\label{eqn:4.00}
\vert b(u,v, w)\vert=\vert b(u,w, v)\vert\leq \| u\|_{\LL^4}   \| v \|_{\LL^4}\| \nabla w\|
_{\LL^2}.
\end{equation}
\end{proof}

From  \cite[Lemma III.3.3]{Temam-2001} we have the following
inequality (Gagliardo-Nirenberg interpolation inequality)
 \begin{eqnarray}\label{eqn:4.000}
\| v \|_{\LL^4} \leq C\| v \|
^{1/2}_{\LL^2}\| \nabla v \| ^{1/2}_{\LL^2}, \quad v\in
 \mathbb{H}_0^{1}(D).
 \end{eqnarray}


 If $u,v$ are such that the linear map $b(u,v,\cdot)$
is continuous on $\mathrm{V}$, the corresponding element of $\mathrm{V}^\prime$ (dual space) will
be denoted by $B(u,v)$ and we have
\begin{align*}
_{V'}\langle B(u,v),w\rangle_V =\int B(u(x),v(x))w(x) \d x =b(u,v,w).
\end{align*}
We will also denote (with a slight abuse of notation)
$B(u)=B(u,u)$.
Note that if $u,v \in \mathrm{H}$ are such that $(u\cdot\nabla) v
=\sum_ju_jD_jv\in \LL^2(D)$, then $B(u,v)=\Pi(u \cdot \nabla) v$.

It follows from Lemma \ref{lem:form-b} and \eqref{eqn:4.000} that  $B$ maps
$\LL^4(D)\cap\mathrm{H}$ (and so $\mathrm{V}$) into
$\mathrm{V}^\prime$ and there exist $C_{1},C_{2}>0$ such that
\begin{equation}\label{eqn:4.0}
\| B(u) \|_{\mathrm{V}^\prime} \leq C_1\| u
\|^2_{\mathbb{L}^4(D)} \leq C^2 C_1 \|u \|_{\LL^{2}}  \|
\nabla  u \|_{\LL^{2}} \leq C_2\| u \|_V^2 , \quad
 u \in \mathrm{V}.
\end{equation}
\begin{lem}\label{lem-B-eq} The map $B$ has a unique extension from $\LL^4(0,T;\LL_{sol}^4(D))$ to $\LL^2(0,T;V')$ and there exists $C>0$ such that for all $u,v\in \LL^4(0,T;\LL^4_{sol}(D))$,
\begin{align*}
\|B(u)-B(v)\|_{\LL^2(0,T;V')}\leq C(\|u\|_{\LL^4(0,T;\LL^4_{sol}(D))}+\|v\|_{\LL^4(0,T;\LL^4_{sol}(D))})\|u-v\|_{\LL^4(0,T;\LL^4_{sol}(D))}.
\end{align*}
\end{lem}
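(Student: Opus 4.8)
The plan is to exploit the bilinearity of $B$ together with the pointwise-in-time estimate \eqref{eqn:4.00}, and then to pass from these pointwise bounds to the space-time norms by Hölder's inequality in the time variable. The whole argument uses only bilinearity of $b$ and the single bound \eqref{eqn:4.00}; neither symmetry nor antisymmetry of $b$ is needed.

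First I would record a genuinely bilinear version of the bound. Since $\|w\|_V=\|\nabla w\|_{\LL^2}$, taking the supremum over $\|w\|_V\le 1$ in \eqref{eqn:4.00} gives, for a.e.\ fixed $t$ and any $u(t),v(t)\in\LL^4_{sol}(D)$, the estimate $\|B(u(t),v(t))\|_{V'}\le C\|u(t)\|_{\LL^4}\|v(t)\|_{\LL^4}$. Here I use that $\LL^4_{sol}(D)\subset\LL^4(D)\cap H$ because $D$ is bounded, so that Lemma \ref{lem:form-b} applies to both slots. With $v=u$ this already yields $\int_0^T\|B(u(t))\|_{V'}^2\,\d t\le C^2\int_0^T\|u(t)\|_{\LL^4}^4\,\d t<\infty$, so that $B(u)\in\LL^2(0,T;V')$ for every $u\in\LL^4(0,T;\LL^4_{sol}(D))$.

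Second, for the difference I would use the algebraic identity $B(u)-B(v)=B(u,u-v)+B(u-v,v)$, valid by bilinearity, which combined with the bilinear bound gives pointwise $\|B(u(t))-B(v(t))\|_{V'}\le C\|u(t)-v(t)\|_{\LL^4}\big(\|u(t)\|_{\LL^4}+\|v(t)\|_{\LL^4}\big)$. Squaring, integrating in $t$, and applying Cauchy--Schwarz on $(0,T)$ to separate the two factors yields
\[
\|B(u)-B(v)\|_{\LL^2(0,T;V')}^2\le C^2\Big(\int_0^T\|u(t)-v(t)\|_{\LL^4}^4\,\d t\Big)^{1/2}\Big(\int_0^T\big(\|u(t)\|_{\LL^4}+\|v(t)\|_{\LL^4}\big)^4\,\d t\Big)^{1/2}.
\]
Recognising the first factor as $\|u-v\|_{\LL^4(0,T;\LL^4_{sol}(D))}^2$ and bounding the second by $\big(\|u\|_{\LL^4(0,T;\LL^4_{sol}(D))}+\|v\|_{\LL^4(0,T;\LL^4_{sol}(D))}\big)^2$ via Minkowski's inequality in $L^4(0,T)$, then taking square roots, reproduces exactly the claimed estimate. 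For the uniqueness of the extension, the case $v=0$ shows $B$ is Lipschitz on bounded sets as a map $\LL^4(0,T;\LL^4_{sol}(D))\to\LL^2(0,T;V')$; since $\mathcal V$-valued step functions are dense in $\LL^4(0,T;\LL^4_{sol}(D))$ and the bilinear form agrees with $b$ there, the continuous extension is unique.

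The only real care required is bookkeeping rather than analysis: one must check that $t\mapsto B(u(t))$ is strongly $V'$-measurable — which follows by approximating $u$ by simple $\LL^4_{sol}(D)$-valued functions and using the continuity of $B$ furnished by the bilinear bound — and one must choose the Hölder exponents so that both time integrals land precisely on the fourth powers needed to recover the $\LL^4$-in-time norms. Beyond \eqref{eqn:4.00} there is no further analytic difficulty.
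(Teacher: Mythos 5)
Your proposal is correct and follows essentially the same route as the paper: the bilinear decomposition $B(u)-B(v)=B(u,u-v)+B(u-v,v)$, the pointwise $V'$-bound coming from \eqref{eqn:4.00}, and Cauchy--Schwarz in the time variable. The extra bookkeeping you supply (Minkowski in $L^4(0,T)$, strong measurability of $t\mapsto B(u(t))$, and the density argument for uniqueness of the extension) is sound and merely makes explicit what the paper leaves implicit; note only that \eqref{eqn:4.00} itself is obtained via the antisymmetry \eqref{eq-b-1}, so that property is used indirectly even in your version.
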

\begin{remark}As we can see from the proof, constant $C$ is independent of $T$.
\end{remark}
\begin{proof} Let $u\in \LL^4(0,T; \LL_{sol}^4(D))$ and $w\in \LL^2(0,T;V)$. By the definition we have
   \begin{align*}
   _{V'}\langle B(u,u),w\rangle_V=b(u,u,w)=-b(u,w,u).
   \end{align*}
   By \eqref{eqn:4.00} we obtain
   \begin{align*}
   |_{V'}\langle B(u,u),w\rangle_V|=|b(u,w,u)|\leq \|u\|_{\LL^4}^2\|w\|_V.
    \end{align*}
    Hence we have
       \begin{align*}
   \int_0^T \|B(u(t),u(t))\|^2_{V'} \d t\leq \int_0^T\|u(t)\|_{\LL^4}^4 \d t,
    \end{align*}
    whenever the RHS is finite.

   Now we take $u,v\in \LL^4(0,T; \LL_{sol}^4(D))$, applying a similar argument gives
\begin{align*}
         |_{V'}\langle B(u,u)-B(v,v),w\rangle_V|&=|b(u,u-v,w)+b(u-v,v,w)|\\
         &\leq (\|u\|_{\LL^{4}}+\|v\|_{\LL^{4}})\|u-v\|_{\LL^{4}}\cdot \|w\|_{V}
\end{align*}
Hence we obtain
 \begin{equation}\begin{split}
          \int_0^T \|B(u(t),u(t))-B(v(t),v(t))\|_{V'}^2 \d t &\leq C\Big(\int_0^T\|u(t)-v(t)\|^4_{\LL^4} \d t\Big)^{\frac12} \\ &\times  \Big(\int_0^T(\|u(t)\|^4_{\LL^4}+\|v(t)\|^4_{\LL^4}) \d t\Big)^{\frac12}.
    \end{split}
    \end{equation}

\end{proof}
We denote by $\mathcal{P}$ the predictable $\sigma$-field on $[0,T]\times\Omega$, $i.e.$ the $\sigma$-field generated by all left continuous and $\{\mathcal{F}_t\}_{t\ge 0}$-adapted
		 real-valued processes on $[0,T]\times\Omega$.
Suppose that the function $\xi:[0,T]\times\Omega\times Z\ni (t,\omega,z)\mapsto \xi(t,\omega,z)\in \mathbb{L}^{4}_{sol}(D)$ is $\mathcal{P}\otimes\mathcal{Z}$-measurable and satisfies the following integrability
\begin{align*}
\E\int_{0}^{T}\|\xi(t,z)\|^{2}_{\LL^{4}}\nu(\d z)\d t<\infty.
\end{align*}
In Theorem \ref{main-theo}, we actually use weaker assumption, i.e. replace $\LL^{4}(D)$ by $ \mathbb{H}^{-2\alpha,4}(D)$.
 Since $\LL^p(D)$ spaces for $p\geq 2$ are martingale type $2$ Banach spaces, one can define the stochastic integral of $\mathcal{P}\otimes\mathcal{Z}$-measurable function $\xi$ in the martingale type $2$ Banach space setting, see \cite{Brz-Hau, Br+Liu+Zhu2019}.  Recall that the stochastic integral process
  \begin{align}
		 \int_0^t\int_Z \xi(s,\cdot,z)\,\tilde{N}(\d s,\d z),\;\; t\in[0,T],
		 \end{align}
 is a c\`{a}dl\`{a}g $2$-integrable $\LL^{4}(D)$-valued martingale and it satisfies the following inequality
	         	\begin{align}\label{sec-2-eq-10}
		\mathbb{E}\Big{\|}\int_0^t\int_Z \xi(s,z)\,\tilde{N}(\d s,\d z)\Big{\|}_{\LL^{4}}^2
		\leq C\mathbb{E}\int_0^t\int_Z\|\xi(s,z)\|_{\LL^{4}}^2\,\nu(\d z)\,\d s,\; t\in[0,T].
		\end{align}



By applying $\Pi_{p}~ (p=4)$ to  \eqref{NSE} we have
\begin{align}\label{NSE-abstract}
\begin{split}
&\d u(t)=-\Big[Au(t)+B(u(t),u(t))\Big]\d t
+\int_Z \xi(t,z)\tilde{N}(\d t,\d z) , \\
&u(0)=u_0,
\end{split}
\end{align}
where $B(u,u)=-\Pi_{p}(u\cdot \nabla)u$ and $u_{0}\in  \mathbb{H}^{-2\alpha,4}(D)$ (the dual space of  $\mathbb{H}^{2\alpha,\frac{4}{3}}(D)$) for $0<\alpha<\frac14$. Here and in the sequel we drop the subscript $p$ attached to $A$ and $\Pi$.  Let $(e^{-tA})$ be the semigroup generated by $-A$ in $\LL^{4}_{sol}(D)$.
\begin{defi}\label{def} Let $0<\alpha<\frac14$. An $\mathbb{H}^{-2\alpha,4}(D)$-valued adapted process $u(t)$, $t\in[0,T]$, is a solution to \eqref{NSE-abstract} if it satisfies
\begin{align}
u\in \LL^{4}(0,T;\LL^{4}_{sol}(D)),\ \text{a.s.}
\end{align}
and for almost all $t\in[0,T]$, the following equality holds $a.s.$
\begin{align}
        u(t)=u_0-\int_0^t e^{-(t-s)A}B(u(s),u(s))\d s+\int_0^te^{-(t-s)A}\xi(s,z)\,\tilde{N}(\d s,\d z).
\end{align}
\end{defi}

\begin{remark}
\label{rem-new} We do not know yet whether the stochastic convolution process $Z$ defined by formula
\eqref{eqn-OU process} has an  $\mathbb{H}^{-2\alpha,4}(D)$-valued  c\`{a}dl\`{a}g modification and thus we do not require that from the solution from Definition
\ref{def} to be a  c\`{a}dl\`{a}g $\mathbb{H}^{-2\alpha,4}(D)$-valued  process. However,  note that in the case of stochastic convolution process being driven by a (cylindrical) Wiener process, see
\cite[Theorem 1.1]{V+W_2011}, under some natural assumptions, the Gaussian counterpart of the process $Z$ has an  $\mathbb{H}^{-2\alpha,4}(D)$-valued  c\`{a}dl\`{a}g modification. This is because the Stokes operator satisfies the following condition:
 the operator $-A$ has a bounded $H^\infty$-calculus of angle $<\frac{\pi}{2}$.
 \end{remark}

In order to prove the main existence and uniqueness result for the stochastic Navier-Stokes Equations \eqref{NSE-abstract}, we  need the following assumption on the process $\xi$.
\begin{assu}\label{assu-xi}We  assume that for some  $\alpha  \in (0,\frac14)$,  a process $\xi$ satisfies
\begin{align}
&\EE\left(\int_0^T\int_Z
\|\xi(s,z)\|^2_{\mathbb{H}^{-2\alpha,4}}\nu(\d z)\d s\right)<\infty.
\end{align}
\end{assu}

Let us now state our main result.
\begin{theorem}\label{main-theo}
      Under Assumption \ref{assu-xi}, for every $u_{0}\in  \mathbb{H}^{-2\alpha,4}(D)$, there exists a unique solution to equation \eqref{NSE-abstract} (in the sense of Definition \ref{def}).
\end{theorem}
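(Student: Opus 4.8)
The plan is to construct the unique solution via a fixed point argument (Banach contraction) in the space $\mathcal{X}_T:=\LL^4(0,T;\LL^4_{sol}(D))$, exploiting the mild formulation from Definition \ref{def}. First I would isolate the stochastic part by defining the Ornstein--Uhlenbeck process (stochastic convolution)
\begin{align*}
Z(t)=\int_0^t\int_Z e^{-(t-s)A}\xi(s,z)\,\tilde N(\d s,\d z),\quad t\in[0,T],
\end{align*}
and show that, under Assumption \ref{assu-xi}, its trajectories belong to $\mathcal{X}_T$ almost surely. This is the crucial regularity step flagged in the introduction: using the Burkholder inequality for martingale type $2$ spaces together with the smoothing estimate of Lemma \ref{lem-est-W} (which lets $e^{-tA}$ map $\mathbb{H}^{-2\alpha,4}(D)$ into $\LL^4_{sol}(D)=\mathbb{H}^{0,4}(D)$ at the price of $t^{-\alpha}$), I would bound $\E\int_0^T\|Z(t)\|_{\LL^4}^4\,\d t$, the integrability of the singular kernel $t^{-\alpha}$ being guaranteed precisely by $\alpha<\tfrac14$. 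I expect this estimate to be the main obstacle, since one must handle the fourth moment of a stochastic convolution whose integrand only lives in a negative-order Sobolev space, and the semigroup is merely bounded analytic rather than contractive.

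Next I would reduce the problem to a deterministic one by the change of variable $v(t)=u(t)-Z(t)$. Writing $y_0(t)=e^{-tA}u_0$, the process $v$ should satisfy the pathwise integral equation
\begin{align*}
v(t)=y_0(t)-\int_0^t e^{-(t-s)A}B\big(v(s)+Z(s)\big)\,\d s.
\end{align*}
Here Lemma \ref{lem-est-W} again gives $y_0\in\mathcal{X}_T$ since $u_0\in\mathbb{H}^{-2\alpha,4}(D)$, and I would define the solution map $\Phi$ on $\mathcal{X}_T$ by $\Phi(v)(t)=y_0(t)-\int_0^t e^{-(t-s)A}B(v(s)+Z(s))\,\d s$. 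The key analytic input for $\Phi$ to land in $\mathcal{X}_T$ is that the deterministic convolution $\int_0^t e^{-(t-s)A}B(w(s))\,\d s$ maps $\mathcal{X}_T$ into itself: combining the bound $\|B(w)\|_{V'}\le C\|w\|_{\LL^4}^2$ from \eqref{eqn:4.0} with the smoothing of $e^{-tA}$ from $V'=\mathbb{H}^{-1,2}$-type spaces into $\LL^4_{sol}(D)$ (via the embeddings and Lemma \ref{lem-est-W}), the singular kernel has an integrable exponent, and a Young/Hölder estimate in time yields the required $\LL^4(0,T)$ bound.

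Finally I would establish that $\Phi$ is a contraction on a suitable ball of $\mathcal{X}_T$. Using the bilinearity estimate of Lemma \ref{lem-B-eq},
\begin{align*}
\|B(w_1)-B(w_2)\|_{\LL^2(0,T;V')}\le C\big(\|w_1\|_{\mathcal{X}_T}+\|w_2\|_{\mathcal{X}_T}\big)\|w_1-w_2\|_{\mathcal{X}_T},
\end{align*}
together with the smoothing bound for the convolution, I would show $\|\Phi(v_1)-\Phi(v_2)\|_{\mathcal{X}_T}\le C\,T^{\delta}\big(R+\|Z\|_{\mathcal{X}_T}\big)\|v_1-v_2\|_{\mathcal{X}_T}$ for some $\delta>0$ and radius $R$ on a short time interval $[0,T_1]$. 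Choosing $T_1$ small (pathwise, depending on $\omega$ through $\|Z\|_{\mathcal{X}_T}$) makes the Lipschitz constant less than one, yielding a unique local fixed point; since the nonlinearity is quadratic and the a priori bounds are linear in $T$, I would then patch local solutions together to reach the full interval $[0,T]$, giving global existence and uniqueness. Measurability and adaptedness of $u=v+Z$ follow since each Picard iterate is adapted and the convergence is in $\mathcal{X}_T$.
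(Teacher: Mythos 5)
Your overall architecture coincides with the paper's: isolate the stochastic convolution $Z$, prove $Z\in\LL^4(0,T;\LL^4_{sol}(D))$ a.s., subtract it (together with $e^{-tA}u_0$) to get a pathwise deterministic equation, solve that by Banach fixed point in $\LL^4(0,T;\LL^4_{sol}(D))$, and globalize. However, three of your steps would fail as written. First, you propose to bound $\E\int_0^T\|Z(t)\|^4_{\LL^4}\,\d t$, i.e.\ a fourth moment. Assumption \ref{assu-xi} only provides square integrability of $\|\xi\|_{\mathbb{H}^{-2\alpha,4}}$, and the Burkholder inequality for $p=4$ with jump noise requires in addition $\E\int_0^T\int_Z\|\cdot\|^4\,\nu(\d z)\d s<\infty$, which is not assumed. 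The paper avoids this (Proposition \ref{prop-z-L-4}) by regarding the whole trajectory $t\mapsto 1_{[s,T]}(t)e^{-(t-s)A}\xi(s,z)$ as a single element of the martingale type $2$ space $\LL^4(0,T;\LL^4_{sol}(D))$ and applying the \emph{second}-moment Burkholder inequality to that space-valued integral; the conclusion is $\E\|Z\|^2_{\LL^4(0,T;\LL^4_{sol}(D))}<\infty$, which suffices for a.s.\ finiteness but is strictly weaker than what you claim to prove.

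Second, your mapping estimate for the deterministic convolution is an endpoint case where plain Young/H\"older fails: with $\|B(w)\|_{V'}\lesssim\|w\|^2_{\LL^4}$ one has $B(w)\in\LL^2(0,T;V')$, and the smoothing $\|e^{-tA}f\|_{\LL^4}\lesssim t^{-3/4}\|f\|_{V'}$ gives a kernel $t^{-3/4}$ that lies in $L^q(0,T)$ only for $q<\frac43$, so Young's convolution inequality yields $\LL^r(0,T;\LL^4)$ only for $r<4$, not $r=4$. You would need the weak-Young/Hardy--Littlewood--Sobolev inequality, or, as the paper does (Lemmas \ref{lem-sob-1} and \ref{lem-B}), the energy estimate $\Phi_f\in\LL^\infty(0,T;H)\cap\LL^2(0,T;V)$ combined with the Ladyzhenskaya interpolation \eqref{eqn:4.000}. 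Third, your globalization by ``patching local solutions'' is not justified: the nonlinearity is quadratic, so without an a priori bound the local existence intervals could shrink. The paper's continuation argument hinges on the cancellation $b(u,v,v)=0$, which yields the differential inequality \eqref{est-04.01} and, via Gronwall, a bound on $\sup_t\|Y(t)\|_H$ in terms of $\int_0^T\|\hat Z\|^4_{\LL^4}$ alone; this energy estimate is absent from your sketch and is the essential reason the solution is global.
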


\begin{proof}
This result is an immediate consequence of Proposition \ref{prop-main} in next section.
\end{proof}



\section{Proof of main result}

We first recall the following two standard results (see. e.g. \cite{[Prato+Zab_1996]}).
\begin{lem}\label{lem-sob-1} For each $T>0$,
\begin{align*}
\LL^{\infty}(0,T;H)\cap \LL^2(0,T;V)\subset \LL^4(0,T;\LL^4_{sol}(D)),
\end{align*}
and there exists a constant $C$, independent of $T>0$, such that for all $v\in \LL^4(0,T;\LL^4_{sol}(D))$,
\begin{align*}
\int_0^T\|v\|^4_{\LL^4 }\d t\leq C(\|v\|^4_{\LL^{\infty}(0,T;H)}+\|v\|_{\LL^2(0,T;V)}^4).
\end{align*}
\end{lem}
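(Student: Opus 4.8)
The plan is to reduce everything to the Gagliardo--Nirenberg interpolation inequality \eqref{eqn:4.000} together with a single time integration and Young's inequality. First I would take $v\in \LL^{\infty}(0,T;H)\cap \LL^2(0,T;V)$ and observe that for almost every $t\in[0,T]$ we have $v(t)\in V\subset \mathbb{H}_0^1(D)$, so that \eqref{eqn:4.000} applies pointwise in $t$. Raising it to the fourth power gives
\[
\|v(t)\|_{\LL^4}^4 \leq C^4 \|v(t)\|_{\LL^2}^2\,\|\nabla v(t)\|_{\LL^2}^2,\qquad \text{for a.e. } t\in[0,T].
\]

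Next I would integrate this bound over $[0,T]$. Since $H=\LL^2_{sol}(D)$ carries the $\LL^2$-norm, I can estimate $\|v(t)\|_{\LL^2}^2\leq \|v\|_{\LL^{\infty}(0,T;H)}^2$ for a.e. $t$ and pull this essential supremum out of the integral, which leaves
\[
\int_0^T \|v(t)\|_{\LL^4}^4\,\d t \leq C^4\,\|v\|_{\LL^{\infty}(0,T;H)}^2 \int_0^T \|\nabla v(t)\|_{\LL^2}^2\,\d t = C^4\,\|v\|_{\LL^{\infty}(0,T;H)}^2\,\|v\|_{\LL^2(0,T;V)}^2,
\]
where the last equality uses the definition $\|\cdot\|_V^2=\|\nabla\cdot\|_{\LL^2}^2$. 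Finally, applying Young's inequality $ab\leq\tfrac12(a^2+b^2)$ to the product on the right with $a=\|v\|_{\LL^{\infty}(0,T;H)}^2$ and $b=\|v\|_{\LL^2(0,T;V)}^2$ yields the claimed estimate, with a constant $C$ depending only on the Gagliardo--Nirenberg constant and therefore independent of $T$. The inclusion is then immediate: the right-hand side is finite whenever $v\in \LL^{\infty}(0,T;H)\cap \LL^2(0,T;V)$, so $v\in \LL^4(0,T;\LL^4_{sol}(D))$.

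I do not expect a genuine obstacle here; the statement is essentially a packaging of \eqref{eqn:4.000}, and the only points requiring care are bookkeeping. One should verify that $t\mapsto\|v(t)\|_{\LL^4}^4$ is measurable and that the pointwise inequality holds on a common full-measure subset of $[0,T]$, both of which are standard. The one subtlety worth flagging is the requirement that $C$ be independent of $T$: this is preserved precisely because the argument never invokes Hölder's inequality in the time variable (which would introduce a power of $T$), using instead only the essential supremum in time and a single integration, after which the two $T$-dependent norms reappear exactly as the quantities on the right-hand side.
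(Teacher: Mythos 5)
Your argument is correct and is exactly the standard proof the paper has in mind: the paper states this lemma without proof (citing it as a standard result from Da Prato--Zabczyk), and the intended argument is precisely the pointwise-in-time application of the Ladyzhenskaya/Gagliardo--Nirenberg inequality \eqref{eqn:4.000} followed by integration and Young's inequality, as you do. The only cosmetic remark is that the hypothesis in the displayed estimate should read $v\in \LL^{\infty}(0,T;H)\cap \LL^2(0,T;V)$ (as you implicitly assume), which is evidently a typo in the statement.
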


\begin{lem}\label{lem-B}Let $A$ be the Stokes operator defined above. Then $A$ and $S(t)=e^{-tA}$ have continuous extension from $V$ to $V'$ and the function
\begin{align}
t\mapsto \Phi_{f}(t):=\int_{0}^{t}e^{-(t-s)A}f(s)ds, ~ t\in[0,T], \; f\in \LL^{2}(0,T;V')
\end{align}
belongs to $\LL^{\infty}(0,T;H)\cap \LL^2(0,T;V)$. Furthermore, there exists a constant $C$ (independent of $T$) such that
\begin{align*}
     \|\Phi_{f}\|_{\LL^{\infty}(0,T;H)}+\|\Phi_{f}\|_{\LL^2(0,T;V)}\leq C\|f\|_{\LL^{2}(0,T;V')}.
\end{align*}

\end{lem}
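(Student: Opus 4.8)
The plan is to prove the two claims of Lemma \ref{lem-B} separately, relying on the standard Galerkin/energy-method machinery for the linear Stokes equation and then lifting it to the mild-solution (semigroup convolution) formulation. First I would establish that $A$ extends continuously from $V$ to $V'$: for $u,w\in V$ one has $_{V'}\langle Au,w\rangle_V=\langle\nabla u,\nabla w\rangle_{\LL^2}$, which is bounded by $\|u\|_V\|w\|_V$, so $A$ extends to a bounded operator $V\to V'$ with norm $1$. The continuous extension of $S(t)=e^{-tA}$ from $V$ to $V'$ follows because $A$ is self-adjoint and positive on $H$, so $e^{-tA}$ commutes with fractional powers of $A$ and hence maps each interpolation/dual scale space boundedly into itself; in particular the $H$-contraction semigroup extends to a $V'$-contraction semigroup by duality. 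I would state this extension as immediate from the spectral (or interpolation) properties of the Stokes operator.

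The main work is the regularity and a priori estimate for the convolution $\Phi_f(t)=\int_0^t e^{-(t-s)A}f(s)\,\d s$ with $f\in\LL^2(0,T;V')$. The natural route is to show $\Phi_f$ is the unique weak solution of the linear evolution equation $\Phi_f'(t)+A\Phi_f(t)=f(t)$, $\Phi_f(0)=0$, and then to derive the energy estimate directly from this equation. Testing with $\Phi_f(t)$ gives $\frac12\frac{d}{dt}\|\Phi_f(t)\|_H^2+\|\Phi_f(t)\|_V^2=\,_{V'}\langle f(t),\Phi_f(t)\rangle_V\le \|f(t)\|_{V'}\|\Phi_f(t)\|_V$. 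Applying Young's inequality $ab\le\frac12a^2+\frac12b^2$ to the right-hand side absorbs one factor of $\|\Phi_f(t)\|_V^2$ into the left, leaving $\frac{d}{dt}\|\Phi_f(t)\|_H^2+\|\Phi_f(t)\|_V^2\le\|f(t)\|_{V'}^2$. Integrating in time from $0$ to $t$ and using $\Phi_f(0)=0$ yields simultaneously the bound $\sup_{t\le T}\|\Phi_f(t)\|_H^2\le\|f\|_{\LL^2(0,T;V')}^2$ and $\int_0^T\|\Phi_f(t)\|_V^2\,\d t\le\|f\|_{\LL^2(0,T;V')}^2$, from which the claimed estimate follows with a constant $C$ that is manifestly independent of $T$ (crucial for the later fixed-point argument on arbitrary horizons).

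The delicate point, and the one I expect to be the genuine obstacle, is the rigorous justification of the testing step: a priori $\Phi_f$ is only known to lie in $\LL^2(0,T;V)$, so the time derivative $\Phi_f'$ is only in $\LL^2(0,T;V')$, and the pairing $\frac{d}{dt}\|\Phi_f(t)\|_H^2=2\,_{V'}\langle\Phi_f'(t),\Phi_f(t)\rangle_V$ is not obvious without additional regularity. I would handle this by the usual Lions--Magenes argument: since $\Phi_f\in\LL^2(0,T;V)$ with $\Phi_f'\in\LL^2(0,T;V')$, the Lions--Magenes lemma (see e.g. \cite{Temam-2001}) guarantees that $\Phi_f\in C([0,T];H)$ and that $t\mapsto\|\Phi_f(t)\|_H^2$ is absolutely continuous with $\frac{d}{dt}\|\Phi_f(t)\|_H^2=2\,_{V'}\langle\Phi_f'(t),\Phi_f(t)\rangle_V$ for a.e.\ $t$, which legitimizes the energy identity. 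An alternative, perhaps cleaner, approach is to prove the estimate first for smooth $f$ (where $\Phi_f$ is a classical solution and everything is justified by direct computation) and then extend to general $f\in\LL^2(0,T;V')$ by density together with the bound just obtained; the uniform-in-$T$ constant is preserved under the limit. I would adopt the density route to keep the argument self-contained, treating the Lions--Magenes regularity as the standard fact it is.
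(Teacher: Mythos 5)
Your proof is correct, but there is nothing in the paper to compare it against: the paper states Lemma \ref{lem-B} (together with Lemma \ref{lem-sob-1}) as a ``standard result'' and simply cites Da Prato--Zabczyk \cite{[Prato+Zab_1996]}, offering no proof of its own. What you have written is essentially the canonical argument that such a reference would supply. Your identification of the extension of $A$ via the Dirichlet form $_{V'}\langle Au,w\rangle_V=\langle\nabla u,\nabla w\rangle_{\LL^2}$ is right (and uses that for $p=2$ the Stokes operator is self-adjoint and positive on $H$, so $V=D(A^{1/2})$ and $V'=D(A^{-1/2})$, on which scale the semigroup acts boundedly). The energy identity, Young's inequality, and integration in time give exactly the claimed bound with a constant independent of $T$, which is the point the paper needs for its fixed-point argument. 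You also correctly flag the one genuinely delicate step --- justifying $\frac{d}{dt}\|\Phi_f(t)\|_H^2=2\,_{V'}\langle\Phi_f'(t),\Phi_f(t)\rangle_V$ --- and both of your proposed remedies (the Lions--Magenes lemma, or proving the estimate for smooth $f$ and passing to the limit by density, noting that $\Phi_{f_n}\to\Phi_f$ in $C([0,T];V')$ while $\Phi_{f_n}$ is Cauchy in $\LL^\infty(0,T;H)\cap\LL^2(0,T;V)$ by the uniform bound) are standard and sound; the density route also quietly handles the identification of the mild (convolution) solution with the weak solution for rough $f$, which is the only other point one could press you on. In short: the proposal fills a gap the paper leaves to the literature, and does so correctly.
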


\begin{lem}\label{lem-L_4}
Let $x\in \mathbb{H}^{-2\alpha,4}(D)$ and $\alpha\in\big(0,\frac14\big)$. Then the function $u(t):= e^{-tA}x$, $t\geq 0$ belongs to
 $\LL^4(0,T;\LL^4_{sol}(D))$, for each $T>0$.
\end{lem}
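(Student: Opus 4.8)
The plan is to exploit the smoothing property of the analytic Stokes semigroup to pass from the negative-order space $\mathbb{H}^{-2\alpha,4}(D)$ up to $\LL^4_{sol}(D)$, at the cost of an integrable time singularity at the origin. First I would record that $\LL^4_{sol}(D)=\mathbb{H}^{0,4}(D)$, so that controlling $\|u(t)\|_{\LL^4}$ is the same as controlling $\|e^{-tA}x\|_{\mathbb{H}^{0,4}}$. Measurability of $t\mapsto e^{-tA}x$ as an $\LL^4_{sol}(D)$-valued map on $(0,T]$ is automatic from the analyticity and strong continuity of the semigroup, so only the size of the norm is at issue.

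The key step is to invoke Lemma \ref{lem-est-W} with $p=4$, taking the source space to be $\mathbb{H}^{-2\alpha,4}(D)$ and the target space $\mathbb{H}^{0,4}(D)$. In the notation of that lemma this corresponds to the exponents $-\alpha<0$ (both admissible, since $\alpha>0$ gives $-\alpha<0$), so the regularity gain is $\beta-\alpha=\alpha$, and we obtain a constant $C=C(T,\alpha)$ with
\[
\|e^{-tA}x\|_{\LL^4}\leq C\,t^{-\alpha}\|x\|_{\mathbb{H}^{-2\alpha,4}},\qquad t\in(0,T].
\]
Raising this inequality to the fourth power and integrating over $[0,T]$ then yields
\[
\int_0^T\|e^{-tA}x\|_{\LL^4}^4\,\d t\leq C^4\,\|x\|_{\mathbb{H}^{-2\alpha,4}}^4\int_0^T t^{-4\alpha}\,\d t.
\]

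The final step is to observe that the time integral converges precisely because $\alpha\in\big(0,\tfrac14\big)$: the exponent satisfies $4\alpha<1$, whence $\int_0^T t^{-4\alpha}\,\d t=\frac{T^{1-4\alpha}}{1-4\alpha}<\infty$. This gives $u\in\LL^4(0,T;\LL^4_{sol}(D))$ together with a quantitative bound proportional to $\|x\|_{\mathbb{H}^{-2\alpha,4}}$, which will be convenient later when estimating the deterministic part $e^{-tA}u_0$ of the mild solution.

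I expect no genuine obstacle here; the argument is essentially a smoothing estimate followed by integration. The only points requiring care are the bookkeeping of indices in Lemma \ref{lem-est-W} (matching $\mathbb{H}^{-2\alpha,4}\to\mathbb{H}^{0,4}$ to its hypothesis $\alpha<\beta$) and the recognition that the standing assumption $\alpha<\tfrac14$ is exactly the threshold making the singularity $t^{-4\alpha}$ integrable at the origin. In this sense the restriction $\alpha<\tfrac14$ is sharp for this direct approach.
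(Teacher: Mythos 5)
Your argument is correct and coincides with the paper's own proof: both apply the smoothing estimate of Lemma \ref{lem-est-W} to get $\|e^{-tA}x\|_{\LL^4}\leq C t^{-\alpha}\|x\|_{\mathbb{H}^{-2\alpha,4}}$ and then integrate the fourth power, using $4\alpha<1$. Your index bookkeeping ($\mathbb{H}^{-2\alpha,4}\to\mathbb{H}^{0,4}$, gain $\beta-\alpha=\alpha$) is exactly what the paper leaves implicit.
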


\begin{proof}
 By Lemma \ref{lem-est-W} we have
  \begin{align}
 \|e^{-tA}x\|_{\LL^{4}}\leq C t^{-\alpha}\|x\|_{\mathbb{H}^{-2\alpha,4}}.
 \end{align}
From this we easily deduce for $\alpha\in\big(0,\frac14\big)$
     \begin{align*}
     \int_0^T \|e^{-tA}x\|_{\LL^{4}}^{4}\d t\leq \int_0^T \frac{C}{t^{4\alpha}}\d t \cdot \|x\|^4_{\mathbb{H}^{-2\alpha,4}}\leq C_{T}\|x\|^4_{\mathbb{H}^{-2\alpha,4}}.
\end{align*}
\end{proof}

 Henceforth, we will simplify the form of expression by writing
\begin{align}\label{eqn-OU process}
Z(t)=
\int_0^t\int_Ze^{{-(t-s)A}}\xi(s,z)\tilde{N}(\d s,\d z),\quad t\in[0,T].
\end{align}
For an $\mathbb{H}^{-2\alpha,4}(D)$-valued predictable process $\xi$, we define an $\LL^{4}(0,T;\LL^{4}_{sol}(D))$-valued process $\phi$ as follows
\begin{align}
     \phi(s,z):=\{[0,T]\ni t\mapsto 1_{[s,T]}(t)e^{-(t-s)A}\xi(s,z)\},\quad s\in[0,T],\,z\in Z.
\end{align}
Let us also point out that the process $\phi$ is $\mathcal{P}\otimes\mathcal{Z}$ measurable. Detailed discussion will be done in our forthcoming paper. 

\begin{proposition}\label{prop-z-L-4}
Under Assumption \ref{assu-xi},  we have
\begin{align}\label{L-4-sc}
\int_{0}^{T}\|Z(t)\|^{4}_{\LL^{4}}\d t<\infty, ~ a.s..
\end{align}
\end{proposition}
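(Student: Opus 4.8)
The plan is to prove the stronger, quantitative statement $\EE\|Z\|_E^2<\infty$, where $E:=\LL^4(0,T;\LL^4_{sol}(D))$, since this immediately forces $\|Z\|_E<\infty$ almost surely, which is exactly \eqref{L-4-sc}. The structural observation that drives everything is that $\LL^4(0,T;\LL^4_{sol}(D))$ is a closed subspace of the $L^4$-space $L^4([0,T]\times D;\R^2)$, hence an $L^q$-space with $q=4\geq 2$, and therefore again a martingale type $2$ Banach space. Consequently the Burkholder-type inequality \eqref{sec-2-eq-10} is available not only for $\LL^4_{sol}(D)$-valued integrands but, in the same form, for $E$-valued ones.

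Accordingly, I would stop viewing $Z$ pointwise in $t$ and instead treat it as a single $E$-valued random variable. With the $E$-valued process $\phi$ introduced just before the statement, namely $\phi(s,z)=\{t\mapsto 1_{[s,T]}(t)e^{-(t-s)A}\xi(s,z)\}$, a stochastic-Fubini argument identifies
\[
Z=\int_0^T\int_Z\phi(s,z)\,\tilde{N}(\d s,\d z)\quad\text{in } E,
\]
since evaluating the right-hand side at a fixed $t$ reproduces precisely $\int_0^t\int_Z e^{-(t-s)A}\xi(s,z)\,\tilde{N}(\d s,\d z)=Z(t)$ from \eqref{eqn-OU process}. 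Applying the Burkholder inequality in $E$ then yields
\[
\EE\|Z\|_E^2\leq C\,\EE\int_0^T\int_Z\|\phi(s,z)\|_E^2\,\nu(\d z)\,\d s.
\]

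It then remains only to bound the integrand, and here the estimates mirror those of Lemma \ref{lem-L_4}. By definition $\|\phi(s,z)\|_E^4=\int_s^T\|e^{-(t-s)A}\xi(s,z)\|_{\LL^4}^4\,\d t$, and Lemma \ref{lem-est-W}, applied with endpoints $\mathbb{H}^{-2\alpha,4}(D)$ and $\LL^4_{sol}(D)=\mathbb{H}^{0,4}(D)$, gives $\|e^{-(t-s)A}\xi(s,z)\|_{\LL^4}\leq C(t-s)^{-\alpha}\|\xi(s,z)\|_{\mathbb{H}^{-2\alpha,4}}$. Since $\alpha<\tfrac14$ we have $4\alpha<1$, so $\int_s^T(t-s)^{-4\alpha}\,\d t\leq \tfrac{T^{1-4\alpha}}{1-4\alpha}=:C_T<\infty$, whence $\|\phi(s,z)\|_E^2\leq C_T^{1/2}C^2\|\xi(s,z)\|_{\mathbb{H}^{-2\alpha,4}}^2$. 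Inserting this and invoking Assumption \ref{assu-xi} closes the argument:
\[
\EE\|Z\|_E^2\leq C\,C_T^{1/2}C^2\,\EE\int_0^T\int_Z\|\xi(s,z)\|_{\mathbb{H}^{-2\alpha,4}}^2\,\nu(\d z)\,\d s<\infty.
\]

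The analytic content (the time-integrability of $(t-s)^{-4\alpha}$ and the semigroup smoothing from $\mathbb{H}^{-2\alpha,4}$ to $\LL^4$) is routine and essentially identical to Lemma \ref{lem-L_4}. I expect the genuine obstacle to lie in the two infrastructural facts I have invoked without proof: first, that the Burkholder inequality \eqref{sec-2-eq-10} transfers to the mixed space $E=\LL^4(0,T;\LL^4_{sol}(D))$ with a uniformly controlled constant, which rests on its martingale type $2$ property; and second, the stochastic-Fubini identification of the $E$-valued stochastic integral of $\phi$ with the pointwise stochastic convolution $Z$, together with the $\mathcal{P}\otimes\mathcal{Z}$-measurability of $\phi$ as an $E$-valued map. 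These are exactly the points the text flags as deferred, so the real care is in justifying that interchange rather than in the estimates themselves.
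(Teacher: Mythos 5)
Your proposal is correct and follows essentially the same route as the paper: the authors likewise reinterpret $Z$ as the single $\LL^4(0,T;\LL^4_{sol}(D))$-valued stochastic integral of $\phi$, invoke the martingale type $2$ property of that space to apply the Banach-space Burkholder inequality, and bound $\|\phi(s,z)\|_{\LL^4(0,T;\LL^4_{sol}(D))}$ via the smoothing estimate of Lemma \ref{lem-L_4}. The infrastructural points you flag (measurability of $\phi$ and the identification of the two integrals) are exactly the ones the paper also defers.
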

\begin{proof}
First, by the definition of $\phi$, we observe that
\begin{align*}
  \EE\|Z(\cdot)\|^{2}_{\LL^{4}(0,T;\LL_{sol}^{4}(D))}&=\EE\left(\int_0^T\left\|\int_0^t\int_Z e^{-(t-s)A}\xi(s,z)\tilde{N}(\d s,\d z)\right\|^4_{\LL^4}\d t\right)^{\frac12}\\
  &=\EE\left(\int_0^T\left\|\int_0^T\int_Z\phi(s,z)\tilde{N}(\d s,\d z)\right\|^4_{\LL^4}\d t\right)^{\frac12}\\
&=\EE\left\|\int_0^T\int_Z\phi(s,z)\tilde{N}(\d s,\d z)\right\|^2_{\LL^4(0,T;\LL_{sol}^4(D))}.
\end{align*}
Since  $\LL^4(\mathbb{R}_{+};\LL_{sol}^4(D))$ is  martingale type $2$ Banach space and $\LL^4(0,T;\LL_{sol}^4(D))$ can be isometrically identified with a closed subspace of $\LL^4(\mathbb{R}_{+};\LL_{sol}^4(D))$, therefore $\LL^4(0,T;\LL_{sol}^4(D))$  is also martingale type $2$.

Now applying the Burkholder's inequality in Banach space (see \cite{Brz-Hau, Br+Liu+Zhu2019}), we obtain
\begin{align*}
  \EE\|Z(\cdot)\|^{2}_{\LL^{4}(0,T;\LL_{sol}^{4}(D))}
&\leq C  \EE\left(\int_0^T\int_Z
\|\phi(s,z)\|^2_{\LL^4(0,T;\LL^4_{sol}(D))}\nu(\d z)\d s\right).
\end{align*}
%
With the help of Lemma \ref{lem-L_4},
we have
\begin{align}
\begin{split}\label{prop-Z-est-1}
\| \phi(s,z)\|^{4}_{\LL^4(0,T;\LL_{sol}^4(D))}&=\int_s^T\|e^{-(t-s)A}\xi(s,z)\|^{4}_{\LL^{4}}\,\d t\\
&=\int_0^{T-s}\|e^{-tA}\xi(s,z)\|^{4}_{\LL^{4}}\,\d t\\
&\leq \int_0^{T}\|e^{-tA}\xi(s,z)\|^{4}_{\LL^{4}}\,\d t\\
&\leq C_{T}\|\xi(s,z)\|^{4}_{\mathbb{H}^{-2\alpha,4}}.
\end{split}
\end{align}
Inserting back gives that
\begin{align*}
   \EE\|Z(\cdot)\|^{2}_{\LL^{4}(0,T;\LL_{sol}^{4}(D))}\leq  &C_{T} \EE\left(\int_0^T\int_Z
\|\xi(s,z))\|^2_{\mathbb{H}^{-2\alpha,4}}\nu(\d z)\d s\right)<\infty.
\end{align*}
This completes the proof.
\end{proof}

\begin{remark} For the quasi-geostrophic equation, \eqref{L-4-sc} follows from maximal inequality established in \cite{Br+Liu+Zhu2019}. But for the Navier-Stokes equations, the negative Stokes operator $-A$ only generates a bounded holomorphic $C_{0}$-semigroup which is not necessarily contractive in $\LL_{sol}^{4}(D)$. 
\end{remark}


By setting
\begin{align*}
Y(t)=u(t)-Z(t)-e^{-tA}u_{0},
\end{align*}
we can reduce  \eqref{NSE-abstract} to the following deterministic equation
\begin{align}
\begin{split}\label{deter-eq}
    \d Y(t)&=-AY(t)\d t+B\big(Y(t)+\hat{Z}(t),Y(t)+\hat{Z}(t)\big)\d t,\;\;t\geq 0,\\
    Y(0)&=0,
    \end{split}
\end{align}
where $\hat{Z}(t)=Z(t)+e^{-tA}u_{0}$.
Rewriting \eqref{deter-eq} into the integral form gives that
\begin{align*}
Y(t)&=\int_0^t e^{{-(t-s)A}}B\big(Y(s)+\hat{Z}(s),Y(s)+\hat{Z}(s)\big)\d s.
\end{align*}

\begin{proposition}\label{prop-main}
Under Assumption \ref{assu-xi},   equation \eqref{deter-eq} has a unique solution $Y$ in the space $\LL^4(0,T,\LL^4_{sol}(D))$.
Moreover, $Y\in \adda{C}([0,T];H)$,
\begin{align*}
\sup_{t\in[0,T]}
      \|Y(t)\|^2_H\leq C_2\int_0^Te^{C_1\int_s^T\|\hat Z(r)\|^4_{\LL^4}\d r}\|\hat Z(s)\|^4_{\LL^4}\d s,
\end{align*}
and
\begin{align}
   \int_0^T \|Y(t)\|^2_V\d t
 \leq C_1\sup_{t\in[0,T]} \|Y(t)\|^2_H\int_0^T\|\hat Z(t)\|^4_{\LL^4}\d t+ C_2\int_0^T \|\hat Z(t)\|^2_{\LL^4}\d t.
\end{align}
\end{proposition}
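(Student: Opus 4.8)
The plan is to treat \eqref{deter-eq} pathwise: for a.e.\ $\omega$, Proposition \ref{prop-z-L-4} together with Lemma \ref{lem-L_4} gives $\hat Z(\cdot,\omega)=Z(\cdot,\omega)+e^{-\cdot A}u_0\in\LL^4(0,T;\LL^4_{sol}(D))$, so I may regard $\hat Z$ as a fixed element of that space and solve the deterministic integral equation $Y=\mathcal T(Y)$, where $\mathcal T(Y)(t)=\Phi_{B(Y+\hat Z)}(t)=\int_0^te^{-(t-s)A}B(Y(s)+\hat Z(s),Y(s)+\hat Z(s))\,\d s$. The strategy is the classical two-step scheme that the stated lemmas are tailored for: first a local fixed point, then a global a priori energy bound that prevents blow-up and simultaneously yields the two displayed estimates. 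Note that $\mathcal T$ does map $\LL^4(0,T;\LL^4_{sol}(D))$ into itself, since by Lemma \ref{lem-B-eq} one has $B(Y+\hat Z)\in\LL^2(0,T;V')$, by Lemma \ref{lem-B} the convolution $\Phi_{B(Y+\hat Z)}$ lies in $\LL^\infty(0,T;H)\cap\LL^2(0,T;V)$ with a $T$-independent constant, and by Lemma \ref{lem-sob-1} this space embeds into $\LL^4(0,T;\LL^4_{sol}(D))$.

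For \emph{local} existence and uniqueness I would run Banach's fixed point theorem on a small interval $[0,\tau]$. Chaining Lemmas \ref{lem-sob-1}, \ref{lem-B} and the Lipschitz bound of Lemma \ref{lem-B-eq} gives, for $Y_1,Y_2$ in a ball of $\LL^4(0,\tau;\LL^4_{sol}(D))$, an estimate of the form $\|\mathcal T(Y_1)-\mathcal T(Y_2)\|\le C\big(\|Y_1+\hat Z\|+\|Y_2+\hat Z\|\big)\|Y_1-Y_2\|$ in the $\LL^4(0,\tau;\LL^4_{sol}(D))$ norm, together with $\|\mathcal T(Y)\|\le C'\|Y+\hat Z\|^2$. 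Since $\|\hat Z\|_{\LL^4(0,\tau;\LL^4_{sol}(D))}\to0$ as $\tau\to0$ by absolute continuity of the integral, one can fix a small radius and a small $\tau$ so that $\mathcal T$ is a self-map and a strict contraction; the unique fixed point is the local solution and, by construction, lies in $\LL^\infty(0,\tau;H)\cap\LL^2(0,\tau;V)$ with $\dot Y=-AY+B(Y+\hat Z)\in\LL^2(0,\tau;V')$.

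The a priori estimate is the analytic heart. Because $Y\in\LL^2(V)$ and $\dot Y\in\LL^2(V')$, the Lions--Magenes lemma on the Gelfand triple $V\subset H\subset V'$ licenses the identity $\tfrac12\tfrac{\d}{\d t}\|Y\|_H^2+\|Y\|_V^2={}_{V'}\langle B(Y+\hat Z,Y+\hat Z),Y\rangle_V$ and also yields $Y\in C([0,T];H)$. Writing the right-hand side as $b(Y+\hat Z,Y+\hat Z,Y)$ and using \eqref{eq-b-2} to kill the terms $b(Y,Y,Y)$ and $b(\hat Z,Y,Y)$, I am left with
\[
b(Y+\hat Z,Y+\hat Z,Y)=b(Y,\hat Z,Y)+b(\hat Z,\hat Z,Y).
\]
Bounding each piece by \eqref{eqn:4.00}, interpolating $\|Y\|_{\LL^4}^2\le C\|Y\|_H\|Y\|_V$ via \eqref{eqn:4.000}, and absorbing the $\|Y\|_V$ factors with Young's inequality produces the differential inequality
\[
\frac{\d}{\d t}\|Y(t)\|_H^2+\|Y(t)\|_V^2\le C_1\|\hat Z(t)\|_{\LL^4}^4\,\|Y(t)\|_H^2+C_2\,\|\hat Z(t)\|_{\LL^4}^4 .
\]
Gronwall's lemma with $Y(0)=0$ then gives the stated $\sup_t\|Y(t)\|_H^2$ bound, and integrating over $[0,T]$ while feeding back the sup-bound into the first term yields the $\int_0^T\|Y\|_V^2$ bound (the $\LL^4$-power in the last term being a matter of how one applies Young/Cauchy--Schwarz to the $b(\hat Z,\hat Z,Y)$ contribution). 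Crucially these bounds are finite on all of $[0,T]$ because $\hat Z\in\LL^4(0,T;\LL^4_{sol}(D))$, so the local solution cannot blow up and a standard continuation argument extends it to the full interval; Lemma \ref{lem-sob-1} then places the global $Y$ in $\LL^4(0,T;\LL^4_{sol}(D))$.

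Uniqueness follows by the same energy method applied to $W=Y_1-Y_2$: bilinearity gives $B(Y_1+\hat Z)-B(Y_2+\hat Z)=B(W,Y_1+\hat Z)+B(Y_2+\hat Z,W)$, the second term tests to zero against $W$ by \eqref{eq-b-2}, and the first is controlled as above to give $\tfrac{\d}{\d t}\|W\|_H^2+\|W\|_V^2\le C\|Y_1+\hat Z\|_{\LL^4}^4\|W\|_H^2$ with $W(0)=0$, whence $W\equiv0$ by Gronwall. The step I expect to be most delicate is not any single estimate but the interface between the local fixed point and the global energy bound: one must make sure the continuation does not degrade the regularity needed to justify the Lions--Magenes chain rule, and that the nonlinear terms are absorbed into $\|Y\|_V^2$ cleanly enough to close Gronwall with a right-hand side that is genuinely integrable in time. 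The uniform-in-$T$ constants supplied by Lemmas \ref{lem-B-eq} and \ref{lem-B} are exactly what make this closure, and the $T$-independence claimed in the estimates, go through.
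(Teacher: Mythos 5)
Your proposal follows essentially the same route as the paper's proof: a Banach fixed-point argument for local existence in $\LL^4(0,T_0;\LL^4_{sol}(D))$ built on Lemmas \ref{lem-sob-1}, \ref{lem-B} and \ref{lem-B-eq}, then the energy identity with the cancellation \eqref{eq-b-2}, Ladyzhenskaya-type interpolation and Young's inequality to obtain the Gronwall bound, and finally a no-blow-up continuation argument to reach $T$. The only cosmetic deviations are that you derive uniqueness by an energy estimate on the difference rather than from the contraction property, and that your differential inequality carries $\|\hat Z\|_{\LL^4}^4$ in the forcing term where the paper's has $\|\hat Z\|_{\LL^4}^2$; neither affects the argument.
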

\begin{proof}
We will establish the existence and uniqueness of solutions of \eqref{deter-eq} by the Banach fixed point argument.  For $u\in \LL^{4}(0,T;\LL^{4}_{sol}(D))$, by Lemma \ref{lem-B-eq}, \ref{lem-sob-1} and \ref{lem-B} we obtain
\begin{align*}
     \|\Phi_{B(u)}\|_{\LL^{4}(0,T;\LL^{4}_{sol}(D))}&\leq      \|\Phi_{B(u)}\|_{\LL^{\infty}(0,T;H)}+\|\Phi_{B(u)}\|_{\LL^2(0,T;V)}\leq C\|B(u)\|_{\LL^{2}(0,T;V')}\\
     &\leq C\|u\|_{\LL^{4}(0,T;\LL^{4}_{sol}(D))}.
\end{align*}
Under Assumption \ref{assu-xi}, if $Y\in \LL^4(0,T,\LL^4_{sol}(D))$, by Lemma \ref{lem-L_4} and Proposition \ref{prop-z-L-4} we see that $Y+\hat Z\in \LL^4(0,T,\LL^4_{sol}(D))$.  Hence $\Phi_{B(Y+Z)}\in \LL^4(0,T,\LL^4_{sol}(D))$.

Now we take $Y_{1}, Y_{2} \in \LL^4(0,T,\LL^4_{sol}(D))$.  By Lemma \ref{lem-B-eq} , we deduce that
\begin{align}
\begin{split}\label{B_phi_lip}
    &\|\Phi_{B(Y_{1}+\hat Z)}-\Phi_{B(Y_{2}+\hat Z)}\|_{\LL^{4}(0,T;\LL^{4}_{sol}(D))}\\
     &\leq C\|B(Y_{1}+\hat Z)-B(Y_{2}+\hat Z)\|_{\LL^{2}(0,T;V')}\\
     &\leq C^{\prime}(\|Y_{1}+\hat Z\|_{\LL^4(0,T;\LL^4_{sol}(D))}+\|Y_{2}+\hat Z\|_{\LL^4(0,T;\LL^4_{sol}(D))})\|Y_{1}-Y_{2}\|_{\LL^4(0,T;\LL^4_{sol}(D))}.
     \end{split}
\end{align}
Set $M=\frac{1}{6C'}$. Note that we can always choose $T_{0}$ small enough such that
\begin{align}\label{Z-T_0}
      \|\hat Z\|_{\LL^{4}(0,T_{0};\LL^{4}_{sol}(D))}+ \|\Phi_{B(\hat Z)}\|_{\LL^{4}(0,T_{0};\LL^{4}_{sol}(D))}\leq \frac{M}{2}.
\end{align}
Define
\begin{align*}
     &B_{M}=\{f\in \LL^{4}(0,T_{0};\LL^{4}_{sol}(D)):\|f\|_{\LL^{4}(0,T_{0};\LL^{4}_{sol}(D))}\leq M\},\\
     &\Gamma (Y)=\Phi_{B(Y+\hat{Z})},\quad\text{for }Y\in \LL^{4}(0,T_{0};\LL^{4}_{sol}(D)).
\end{align*}
It follows from \eqref{B_phi_lip} that
\begin{align}\label{Lip-Gamma}
      \|\Gamma(Y_{1})-\Gamma(Y_{2})\|_{\LL^{4}(0,T_{0};\LL^{4}_{sol}(D))}\leq \frac{1}{2}\|Y_{1}-Y_{2}\|_{\LL^{4}(0,T_{0};\LL^{4}_{sol}(D))},\;\text{for }Y_{1},Y_{2}\in B_{M}.
\end{align}
Pick $Y^{(0)}=\Gamma(0)=\Phi_{B(\hat Z)}\in B_{M}$ and define $Y^{(k)}$ for $k=1,2,\cdots$ by $Y^{(k)}:=\Gamma(Y^{(k-1)})$.
A straightforward inductive argument based on \eqref{Lip-Gamma} and \eqref{Z-T_0} yields
\begin{align*}
    \|Y^{(k)}\|_{\LL^{4}(0,T_{0};\LL^{4}_{sol}(D))}\leq M\Big(1-\big(\frac12\big)^{k+1}\Big).
\end{align*}
Hence we infer $Y^{(k)}\in B_{M}$, for $k=0,1,\cdots$. Also, we have from the contraction property \eqref{Lip-Gamma}
\begin{align*}
     \|Y^{k+l}-Y^{k}\|_{\LL^{4}(0,T_{0};\LL^{4}_{sol}(D))}&\leq \|Y^{k+1}-Y^{(k)}\|_{\LL^{4}(0,T_{0};\LL^{4}_{sol}(D))}+\cdots+\|Y^{(k+l)}-Y^{(k+l-1)}\|_{\LL^{4}(0,T_{0};\LL^{4}_{sol}(D))}\\
     & \leq \big(\frac12\big)^{k}\|Y^{(1)}-Y^{(0)}\|_{\LL^{4}(0,T_{0};\LL^{4}_{sol}(D))}+\big(\frac12\big)^{k+l-1}\|Y^{(1)}-Y^{(0)}\|_{\LL^{4}(0,T_{0};\LL^{4}_{sol}(D))}\\
     &\leq \big(\frac12\big)^{k-1}\|Y^{(1)}-Y^{(0)}\|_{\LL^{4}(0,T_{0};\LL^{4}_{sol}(D))}.
\end{align*}
Hence $\{Y^{(k)}\}$ is a Cauchy sequence in $B_{M}$ and so converges to a fixed point $Y \in \LL^4(0,T_{0};\LL^4_{sol}(D))$ of $\Gamma$. The uniqueness of the solution follows from the contraction property \eqref{Lip-Gamma}.


%


Now we shall show that the $H$-norm of $Y$ is bounded on the interval $[0,T_{0}]$. For this, we use the chain rule to get
\begin{align*}
   \frac12 \frac{\d}{\d t}\|Y(t)\|^2_H&=\langle  \frac{\d}{\d t }Y(t), Y(t)\rangle_H\\
   &=-\langle AY(t), Y(t)\rangle_H + _{V'}\langle B(Y(t)+\hat Z(t),Y(t)+\hat Z(t)),Y(t) \rangle _{V}\\
   &=-\int_D|\nabla Y(t)  |^2\d x+ _{V'}\langle B(Y(t)+\hat Z(t),Y(t)+\hat Z(t)),Y(t) \rangle _{V}\\
   &=-\|Y(t)\|_V^2+b\big(Y(t)+\hat Z(t),Y(t)+\hat Z(t),Y(t)\big)\\
   &=-\|Y(t)\|_V^2+b\big(Y(t),\hat Z(t),Y(t)\big)+b\big(\hat Z(t),\hat Z(t),Y(t)\big)\\
   &=-\|Y(t)\|_V^2-b\big(Y(t),Y(t),\hat Z(t)\big)-b\big(\hat Z(t),Y(t),\hat Z(t)\big).
\end{align*}
From Lemma \ref{lem:form-b} we deduce that
\begin{align}
& |b(Y(t),Y(t),\hat Z(t))|\leq C\|Y(t)\|_{\LL^4}\|Y(t)\|_{\mathbb{H}^1_{0}}\|\hat Z(t)\|_{\LL^4};\\
 &|b(\hat{Z}(t),Y(t),\hat Z(t))|\leq C\|\hat Z(t)\|_{\LL^4}\|Y(t)\|_{\mathbb{H}^1_{0}}\|\hat Z(t)\|_{\LL^4}.\label{prop-b-eq-1}
\end{align}
Due to the Sobolev embedding theorem $\mathbb{H}_{0}^{\frac12}(D)\subset \LL^4_{sol}(D)$ and the estimate
\begin{align*}
\|Y(t)\|_{\mathbb{H}_{0}^{\frac12}}\leq \|Y(t)\|_{\mathbb{H}^1_{0}}^{\frac12}\|Y(t)\|_{\LL^{2}}^{\frac12},
\end{align*}
 we infer that
\begin{align}
     \begin{split}\label{prop-b-eq-2}
        |b(Y(t),Y(t),\hat Z(t))|&\leq C \|Y(t)\|_{\mathbb{H}_{0}^{\frac12}}\|Y(t)\|_{\mathbb{H}^1_{0}}\|\hat Z(t)\|_{\LL^4}\\
        & \leq C\|Y(t)\|_{\mathbb{H}^1_{0}}^{\frac32}\|Y(t)\|^{\frac12}_H\|\hat Z(t)\|_{\LL^4}\\
        &=C\|\nabla Y(t)\|_{\LL^{2}}^{\frac32}\|Y(t)\|^{\frac12}_H\|\hat Z(t)\|_{\LL^4}.
        \end{split}
\end{align}
Applying Young's inequality to \eqref{prop-b-eq-2} and \eqref{prop-b-eq-1} we get
\begin{align*}
        |b(Y(t),Y(t),\hat Z(t))|\leq
        \frac14\|\nabla Y(t)\|_{\LL^{2}}^{2}+\frac{27}{4}C^4\|Y(t)\|^2_H\|\hat Z(t)\|^4_{\LL^4}
\end{align*}
and
\begin{align*}
        |b(\hat Z(t),Y(t),\hat Z(t))|&\leq C\|\hat Z(t)\|^2_{\LL^4}\|\nabla Y(t)\|_{\LL^{2}} \leq \frac14 \|\nabla Y(t)\|_{\LL^{2}}^2+C^2\|\hat Z(t)\|^2_{\LL^4}.
\end{align*}
Therefore, we have
\begin{align}
   \frac12 \frac{\d}{\d t}\|Y(t)\|^2_H+\frac12\|Y(t)\|_V^2
 & \leq  \frac{27}{4}C^4\|Y(t)\|^2_H\|\hat Z(t)\|^4_{\LL^4}+C^2\|\hat Z(t)\|^2_{\LL^4}\nonumber\\
  &=\frac12C_1 \|Y(t)\|^2_H\|\hat Z(t)\|^4_{\LL^4}+ \frac12C_2 \|\hat Z(t)\|^2_{\LL^4},\label{est-04.01}
\end{align}
where $C_1=\frac{27}{2}C^4$ and $C_2=2C^2$.

Now applying the Gronwall's Lemma yields that
\begin{align}\label{est-Y-H}
      \|Y(t)\|^2_H\leq C_2\int_0^te^{C_1\int_s^t\|\hat Z(r)\|^4_{\LL^4}\d r}\|\hat Z(s)\|^4_{\LL^4}\d s.
\end{align}
 Integrating both sides of \eqref{est-04.01} we obtain
\begin{align*}
   \int_0^{T_{0}} \|Y(t)\|^2_V\d t
 \leq C_1\sup_{t\in[0,T_{0}]} \|Y(t)\|^2_H\int_0^T\|\hat Z(t)\|^4_{\LL^4}\d t+ C_2\int_0^T \|\hat Z(t)\|^2_{\LL^4}\d t.
\end{align*}
Since by Lemma \ref{lem-L_4} and Proposition \ref{prop-z-L-4}, $\hat Z(\cdot)\in \LL^4(0,T;\LL^4_{sol}(D))$, it follows from \eqref{est-Y-H} that there is a uniform constant $K>0$ such that $\|Y(t)\|^2_{H}\leq K$, for all $t \in [0,T_0]$, i.e. the local solutions cannot blow up in finite time. Thus, by a simple and standard contradiction argument we conclude that $T_0=T$.
\end{proof}
%

\section*{Acknowledgment}
 The authors would like to thank the referees for their very helpful suggestions and comments.


\end{document}